
\documentclass{birkjour}

\usepackage{amssymb}
\usepackage{cite}

%
%
%
 \newtheorem{theorem}{Theorem}[section]
 \newtheorem{corollary}[theorem]{Corollary}
 \newtheorem{lemma}[theorem]{Lemma}
 \newtheorem{proposition}[theorem]{Proposition}
 \theoremstyle{definition}
 
 \theoremstyle{remark}
 \newtheorem{remark}[theorem]{Remark}
 
 \numberwithin{equation}{section}

\newcommand{\R}{{\mathbb R}}
\newcommand{\N}{{\mathbb N}}
\newcommand{\C}{{\mathbb C}}

\newcommand{\K}{{\mathbb K}}
\newcommand{\M}{{\mathbb M}}
\newcommand{\la}{\langle}
\newcommand{\ra}{\rangle}
\newcommand{\Rp}{{\R_+^*}}

\newcommand{\FC}{\mathcal{FC}_{b}^\infty(\mathcal D(X),\K(X))}
\newcommand{\FCM}{\mathcal{FC}_{b}^\infty(\mathcal D(X),\M(X))}

\newcommand{\FCC}{\mathcal{FC}_{b}^\infty(\mathcal D(\hat X),\Gamma(\hat X))}

\newcommand{\FCK}{\mathcal{FC}_{b}^\infty(\mathcal D(\hat X),\K(X))}

\begin{document}

%
%
%
%
%
%
%
%
%

\title[Laplace operators in gamma analysis]
 {Laplace operators in gamma analysis}

\author[Hagedorn]{Dennis~Hagedorn}

\address{%
Fakult\"at f\"ur Mathematik\\
Universit\"at Bielefeld\\
Postfach 10 01 31\\
D-33501 Bielefeld\\
Germany}

\email{dhagedor@math.uni-bielefeld.de}

\author[Kondratiev]{Yuri~Kondratiev}

\address{%
Fakult\"at f\"ur Mathematik\\
Universit\"at Bielefeld\\
Postfach 10 01 31\\
D-33501 Bielefeld\\
Germany}

\email{kondrat@mathematik.uni-bielefeld.de}

\author[Lytvynov]{Eugene~Lytvynov}

\address{Department of Mathematics\\
Swansea University\\
Singleton Park\\
Swansea\\
SA2 8PP\\
U.K}
\email{e.lytvynov@swansea.ac.uk}

\author[Vershik]{Anatoly~Vershik}

\address{%
Laboratory of Representation Theory
and Computational Mathematics\\
St.Petersburg Department of Steklov Institute of Mathematics\\
27 Fontanka\\
St.Petersburg 19102\\
Russia}

\email{vershik@pdmi.ras.ru}


\subjclass{Primary 60G51, 60G55, 60G57; Secondary 60G20, 60H40}

\keywords{Dirichlet form, gamma measure, measure-valued 
L\'evy process, Laplace operator}

\date{}

\begin{abstract}
Let $\K(\R^d)$ denote the cone of discrete Radon measures on $\R^d$. The gamma measure $\mathcal G$ is the probability measure on $\K(\R^d)$ which is a measure-valued L\'evy process with intensity measure $s^{-1}e^{-s}\,ds$ on $(0,\infty)$. We study a class of Laplace-type  operators in $L^2(\K(\R^d),\mathcal G)$. These operators are defined as generators of certain (local) Dirichlet forms. The main result of the papers is the essential self-adjointness of these operators on a set of `test' cylinder functions on $\K(\R^d)$.  
\end{abstract}
 
\maketitle
\section{Introduction}\label{drre6eiu}

Handling and modeling complex systems have become an essential part of modern science. For a long time, complex systems have been treated in physics, where e.g.\ methods of probability theory are used to determine their macroscopic behavior by their microscopic properties. Nowadays, complex systems, including ecosystems, biological populations, societies, and financial markets, play an important role in various fields, like biology, chemistry, robotics, computer science, and social science.

A mathematical tool to study complex  systems is infinite dimensional analysis. Such studies
are often related to a probability measure $\mu$ defined on an infinite dimensional state space.
The most `traditional' example of a measure $\mu$ is  Gaussian (white noise) measure, which is defined on the Schwartz space of tempered distributions, $\mathcal S'(\R^d)$, see e.g.\  \cite{BK,B,HKPS}.
Another example of measure $\mu$ is Poisson random measure on $\R^d$. This is a probability measure on the configuration space $\Gamma(\R^d)$ consisting of all locally finite subsets of $\R^d$. A configuration $\gamma=\{x_i\}\in\Gamma(\R^d)$ may be interpreted either as a collection of indistinguishable physical particles located at points $x_i$, or as a population of a species whose individuals occupy points $x_i$, or otherwise depending on the type of the problem.
The Poisson measure corresponds to a system without interaction between its entities. In order to describe an interaction, one introduces Gibbs perturbations of the Poisson measure, i.e., Gibbs measures on $\Gamma(\R^d)$.

In  papers \cite{AKR,AKR2}, some elements of analysis and geometry on the configuration space $\Gamma(\R^d)$ were introduced. In particular, for each $\gamma=\{x_i\}\in\Gamma(\R^d)$, a tangent space to $\Gamma(\R^d)$ at point $\gamma$ was defined as
$$ T_\gamma(\Gamma):= L^2(\R^d\to\R^d,\gamma),$$
where we identified $\gamma$ with the Radon measure $\sum_{i}\delta_{x_i}$. A gradient of a differentiable function $F:\Gamma(\R^d)\to\R$ was explicitly identified as a function
$$\Gamma(\R^d)\ni\gamma\mapsto(\nabla^\Gamma F)(\gamma)\in T_\gamma(\Gamma).$$
This, in turn, led to a  Dirichlet form
$$
\mathcal E^\Gamma(F,G)=\int_{\Gamma(\R^d)}\la (\nabla^\Gamma F)(\gamma),(\nabla^\Gamma G)(\gamma)\ra_{T_\gamma(\Gamma)}\,d\mu(\gamma),
$$
where $\mu$ is either  Poisson measure or a Gibbs measure.
Denote by $-L^\Gamma$ the generator of the Dirichlet form $\mathcal E^\Gamma$. Then, in the case where $\mu$ is Poisson measure, the operator $L^\Gamma$ can be understood as  a Laplace operator on the configuration space $\Gamma(\R^d)$.

Assume that the dimension $d$ of the underlying space $\R^d$ is $\ge2$.  By using the theory of Dirichlet forms, it was shown that there exists a diffusion process on $\Gamma(\R^d)$ which has generator $L^\Gamma$, see \cite{AKR,AKR2,MRpaper,osada,yoshida}. In particular, this diffusion process has  $\mu$ as an invariant measure. (For $d=1$, in order to construct an associated diffusion process an extension of $\Gamma(\R^d)$ is required.)


A further fundamental example of a probability measure on an infinite dimensional space is given by  the gamma measure \cite{TsVY,VGG1,VGG2,VGG3}. This measure, denoted in this paper  by $\mathcal G$,
was initially defined through its Fourier transform as a probability measure on the Schwartz space of tempered distributions, $\mathcal S'(\R^d)$. White noise analysis related to the gamma measure was initiated by Kondratiev, da Silva, Streit, and Us in \cite{KSSU}, and further developed in \cite{KL,L1,L2}. Note that the gamma measure belongs to the class of five Meixner-type L\'evy measures (this class also includes Gaussian and Poisson measures). Each measure $\mu$ from this Meixner-type class admits a `nice' orthogonal decomposition of $L^2(\mu)$ in orthogonal polynomials of infinitely many variables.
In particular, in the case of the gamma measure $\mathcal G$, these orthogonal polynomials are an infinite dimensional counterpart of the  Laguerre polynomials on the real line \cite{KSSU}.

A more delicate analysis shows that the gamma measure
 is  concentrated on the smaller space $\mathbb M(\R^d)$ of all Radon measures on $\R^d$. More precisely, $\mathcal G$
is concentrated on the cone of discrete Radon measures on $\R^d$, denoted by $\K(\R^d)$. By definition, $\K(\R^d)$ consists of all Radon measures of the form $\eta=\sum_i s_i\delta_{x_i}$. It should be stressed that, with $\mathcal G$-probability one, the countable set of positions, $\{x_i\}$, is dense in $\R^d$. As for the weights $s_i$, with $\mathcal G$-probability one, we have $\eta(\R^d)=\sum_i s_i=\infty$, but for each compact set $A\subset\R^d$,
$\eta(A)=\sum_{i:\, x_i\in A}s_i<\infty$. Elements $\eta\in\K(\R^d)$ may model, for example, biological systems, so that the points $x_i$ represent location of some organisms, and the values $s_i$ are a certain attribute attached to these organisms, like their weight or height.

A very important property of the gamma measure is that it is quasi-invariant with respect to a natural group of transformations  of the weights $s_i$ \cite{TsVY}, see also \cite{LS}.
Note also that an infinite dimensional analog of the Lebesgue measure is absolutely continuous with respect to the gamma measure \cite{TsVY,V}.

In   paper \cite{KLV}, which is currently in preparation, we introduce elements of  differential structure on the space of Radon measures, $\mathbb M(\R^d)$.
 More precisely, for a differentiable function $F:\M(\R^d)\to\nolinebreak\R$, we  define its gradient $(\nabla^{\M}F)(\eta)$ as a function of $\eta\in\M(\R^d)$ taking value at $\eta$ in a tangent space $T_\eta(\M)$ to $\M(\R^d)$ at point $\eta$.
Furthermore, we identify  a class of measure-valued L\'evy processes $\mu$ which are probability measures on $\K(\R^d)$ and which admit an integration by parts formula. This class of measures $\mu$ includes the gamma measure $\mathcal G$ as an  important example.
We introduce and study the corresponding
Dirichlet form
$$
\mathcal E^\M(F,G)=\int_{\K(\R^d)}\la(\nabla^\M F)(\eta), (\nabla^\M G)(\eta)\ra_{T_\eta(\M)}\,d\mu(\eta).
$$
In particular,   we find  an explicit form of the generator $-L^\M$ of this Dirichlet form on a proper set of `test' functions on $\K(\R^d)$. Note that the operator $L^\M$ can, in a certain sense, be thought of as a Laplace operator on $\K(\R^d)$, associated with the  measure $\mu$.

In this paper, we will discuss a class of Laplace-type operators associated with the gamma measure $\mathcal G$. More precisely, we will consider a Dirichlet form
$$
\mathcal E^\M(F,G)=\int_{\K(\R^d)}\la (\nabla^\M F)(\eta),c(\eta) (\nabla^\M G)(\eta)\ra_{T_\eta(\M)}\,d\mathcal G(\eta),
$$
where $c(\eta)$ is a certain coefficient (possibly equal identically to one). We prove that this bilinear form is closable, its closure is a Dirichlet form and derive the generator $-L^{\M}$ of  this form. The main result of the paper is that, under some assumption on the coefficient $c(\eta)$, the operator $L^\M$ is essentially self-adjoint on a proper set of `test' functions on $\K(\R^d)$.

Unfortunately, our result does not yet cover the case where $c(\eta)$ is identically equal to one. The open problem here is to prove the essential self-adjointness of a certain differential operator on $\R^d\times(0,\infty)$.

Let us briefly discuss the structure of the paper. In Section~\ref{buyutfughygy}, we recall basic   notions related to differentiation on $\M(\R^d)$, like a tangent space and a gradient of a function on $\M(X)$, see \cite{KLV}. As intuitively clear,  we have two types  of such objects: one related to transformations of the support of a Radon measure,  which we  call intrinsic transformations, and one related to transformations of masses, which we  call extrinsic transformations. We also combine the two types of tangent spaces/gradients into a full tangent space/gradient.

In Section~\ref{gdydytdr},  we explicitly construct the gamma measure $\mathcal G$
on  $\K(\R^d)$.
In Section \ref{jigtt7i9}, we construct and study  the respective Dirichlet forms on the space $L^2(\K(\R^d),\mathcal G)$. These Dirichlet forms are related to the intrinsic, extrinsic, and full gradients. We carry out integration by parts with respect to the measure $\mathcal G$ and derive generators of these bilinear forms.

Finally, in Section \ref{tyde67e}, we prove the essential self-adjointness in $L^2(\K(\R^d),\mathcal G)$ of the generators of the Dirichlet forms on a proper set of `test' functions on $\K(\R^d)$. To this end, we construct a unitary isomorphism between $L^2(\K(\R^d),\mathcal G)$ and the symmetric Fock space $\mathcal F(\mathcal H)$ over the space $$\mathcal H=L^2(\R^d\times(0,\infty),dx\, s^{-1}e^{-s}\,ds)).$$
 We show that the semigroup $(\mathbf T_t)_{t\ge0}$
in $L^2(\K(\R^d),\mathcal G)$ which corresponds to the Dirichlet form
is unitary isomorphic to the second quantization of a respective semigroup
$(T_t)_{t\ge0}$ in  $\mathcal H$. It can be shown that this semigroup   $(T_t)_{t\ge0}$ generates a diffusion on $\R^d\times(0,\infty)$. In particular, in the extrinsic case, the respective diffusion on $\R^d\times(0,\infty)$  is related to a simple space-time transformation of the square of the 0-dimensional Bessel process on $[0,\infty)$.

In the forthcoming paper \cite{HKL},  by using the theory of Dirichlet forms,  we will prove the existence of a diffusion on $\K(\R^d)$ with generator $L^{\M}$. We will also explicitly construct  the Markov semigroup of kernels on $\K(\R^d)$ which corresponds to this diffusion.
Furthermore, we plan to study equilibrium dynamics on $\K(\R^d)$ for which a Gibbs perturbation of the gamma measure (see \cite{HKPR}) is  a symmetrizing (and hence invariant) measure.

\section{Differentiation on the space of Radon measures}\label{buyutfughygy}
In this section, we briefly recall some definitions from \cite{KLV}.

Let $X$ denote the Euclidean space $\R^d$, $d\in\N,$ and  let $\mathcal B(X)$ denote the Borel $\sigma$-algebra on $X$.
Let  $\mathbb M(X)$ denote the space of all (nonnegative) Radon measures on $(X,\mathcal B(X))$. The space $\mathbb M(X)$ is equipped with the vague topology, i.e., the coarsest topology making all mappings
$$\mathbb M(X)\ni\eta\mapsto\langle \varphi,\eta\rangle:= \int_X \varphi\,d\eta,\quad \varphi\in C_0(X),$$
continuous.
Here $C_0(X)$ is the space of all continuous functions on $X$ with compact support. It is well known (see e.g.\ \cite[15.7.7]{Kal}) that $\mathbb M(X)$ is a Polish space.
Let $\mathcal B(\mathbb M(X))$ denote the Borel $\sigma$-algebra on $\mathbb M(X)$.

Let us now introduce an appropriate notion of a gradient $\nabla^{\M}$ of a differentiable function  $F:\M(X)\to\R$.
We start with  transformations of the support, which we call intrinsic transformations. We fix any $v\in C_0^\infty(X\to X)$, a smooth, compactly supported vector field over $X$. Let $(\phi_t^v)_{t\in\R}$ be the corresponding  one-parameter group of diffeomorphisms of $X$  which are equal to the identity outside a compact set in $X$. More precisely, $(\phi_t^v)_{t\in\R}$ is the unique solution of the Cauchy problem
\begin{equation}\label{uigt9}\left\{\begin{aligned}
&\frac{d}{dt}\,\phi_t^v(x)=v(\phi_t^v(x)),\\
&\phi_0^v(x)=x.\end{aligned}
\right.\end{equation}
We naturally lift the action of this group to the space $\M(X)$. For each $\eta\in\M(X)$, we define $\phi_t^v(\eta)\in\M(X)$
as the pushforward of $\eta$ under the mapping $\phi_t^v$. 
Hence, for each $f\in L^1(X,\eta)$,
\begin{equation}\label{fdtrdss}
\la f, \phi_t^v(\eta)\ra=\la f\circ \phi_t^v,\eta\ra.
\end{equation}
For a function $F:\M(X)\to\R$, we define the intrinsic derivative of $F$ in direction $v$ by
\begin{equation}\label{nbufro78r}
(\nabla_v^{\mathrm{int}}F)(\eta):=\frac d{dt}\Big|_{t=0}F( \phi_t^v(\eta)),\quad \eta\in\M(X),
\end{equation}
provided the derivative on the right hand side of formula \eqref{nbufro78r} exists. As an intrinsic tangent space to $\M(X)$ at point $\eta\in \M(X)$ we choose the space $$T_\eta^{\mathrm{int}}(\M):=L^2(X\to X,\eta),$$ i.e., the space of $X$-valued functions
on $X$ which are square integrable with respect to the measure $\eta$.
The intrinsic gradient of $F$ at point $\eta$ is, by definition, the element $(\nabla^{\mathrm{int}}F)(\eta)$ of $T_\eta^{\mathrm{int}}(\M)$
satisfying
\begin{align}(\nabla_v^{\mathrm{int}}F)(\eta)&=((\nabla^{\mathrm{int}}F)(\eta),v)_{T_\eta^{\mathrm{int}}(\M)}\notag\\
&=\int_X \la (\nabla^{\mathrm{int}}F)(\eta,x),v(x)\ra_X\,d\eta(x),\quad v\in C_0^\infty(X\to X).
\label{vufr7u}\end{align}
(In the above formula, $\la\cdot,\cdot\ra_X$ denotes the usual scalar product in $X$.)

We will now introduce transformations of the masses, which we call extrinsic transformations.
 We fix any $h\in C_0(X)$. We consider the one-parameter group of transformations of $\M(X)$ given through multiplication of  each measure $\eta\in\M(X)$ by the function $e^{th(x)}$, $t\in\R$. Thus, for each $\eta\in\M(X)$, we define $M_{th}(\eta)\in\M(X)$ by
\begin{equation}\label{tudr7yer} d M_{th}(\eta)(x):= e^{th(x)}\,d\eta(x).\end{equation}
The extrinsic derivative of a function $F:\M(X)\to\R$ in direction $h$ is defined
by
\begin{equation}\label{hvfy7ed7y}
(\nabla_h^{\mathrm{ext}}F)(\eta):=\frac d{dt}\Big|_{t=0}F( M_{th}(\eta)),\quad\eta\in\M(X),
\end{equation}
provided the derivative on the right hand side of \eqref{hvfy7ed7y} exists.
As an extrinsic tangent space to $\M(X)$ at point $\eta\in \M(X)$ we choose  $$T_\eta^{\mathrm{ext}}(\M):=L^2(X,\eta).$$
The extrinsic gradient of $F$ at point $\eta$ is defined to be the element $(\nabla^{\mathrm{ext}}F)(\eta)$ of $T_\eta^{\mathrm{ext}}(\M)$
satisfying
\begin{align}(\nabla_h^{\mathrm{ext}}F)(\eta)&=((\nabla^{\mathrm{ext}}F)(\eta),h)_{T_\eta^{\mathrm{ext}}(\M)}\notag\\
&=\int_X  (\nabla^{\mathrm{ext}}F)(\eta,x) h(x) \,d\eta(x),\quad h\in C_0(X).
\label{hcdfgUYLF}\end{align}

We finally combine the intrinsic and extrinsic differentiation. For any $\eta\in\M(X)$, the full tangent space to $\M(X)$ at point $\eta$ is defined by
$$T_\eta(\M):=T_\eta^{\mathrm{int}}(\M)\oplus T_\eta^{\mathrm{ext}}(\M).$$ We  define
the full gradient $\nabla^{\M}:=(\nabla^{\mathrm{int}}, \nabla^{\mathrm{ext}})$.

For example, let us consider the set $\FCM$
of all functions $F:\M(X)\to\R$ of the form
\begin{equation}\label{ytf8rtf}F(\eta)=g(\la f_1,\eta\ra,\dots,\la f_N, \eta\ra),\end{equation}
where $g\in C_{b}^\infty(\R^N)$ (an infinitely differentiable function on $\R^N$ which, together with all its derivatives, is bounded), $f_1\,\dots,f_N\in\mathcal D(X)$, and $N\in\N$. Here $\mathcal D(X):=C_0^\infty(X)$ is the space of all smooth, compactly supported functions on $X$. An easy calculation shows that
\begin{align}
(\nabla^{\mathrm{int}}F)(\eta,x)&=\sum_{i=1}^N(\partial_i g)(\la f_1,\eta\ra,\dots,\la f_N,\eta\ra)\nabla f_i(x),\label{vtydr7}\\
(\nabla^{\mathrm{ext}}F)(\eta,x)&=\sum_{i=1}^N(\partial_i g)(\la f_1,\eta\ra, \dots,\la f_N,\eta\ra)f_i(x),\label{vtxsgsydr7}
\end{align}
so that
$$(\nabla^{\M}F)(\eta,x)=\sum_{i=1}^N(\partial_i g)(\la f_1,\eta\ra, \dots,\la f_N,\eta\ra)(\nabla f_i,f_i).
$$
Here $\partial_i g$ denotes the partial derivative of $g$ in the $i$-th variable.

\section{Gamma measure}\label{gdydytdr}
 In this section, following \cite{TsVY,KLV}, we will  recall a construction of the gamma measure.
Recall that we denote by $\K(X)$ the cone of  discrete Radon measures on $X$:
$$\mathbb K(X):=\left\{
\eta=\sum_i s_i\delta_{x_i}\in \mathbb M(X) \mid s_i>0,\, x_i\in X
\right\}.$$
Here, $\delta_{x_i}$ is the Dirac measure with mass at $x_i$, the atoms $x_i$ are assumed to be distinct and their total number is at most countable. By convention, the cone $\mathbb K(X)$ contains the null mass $\eta=0$, which is represented by the sum over the empty set of indices $i$. We denote $\tau(\eta):=\{x_i\}$, i.e., the set on which the measure $\eta$ is concentrated. For $\eta\in\K(X)$ and $x\in\tau(\eta)$, we  denote by $s(x)$ the mass of $\eta$ at  point $x$, i.e., $s(x):=\eta(\{x\})$. Thus, each $\eta\in\K(X)$ can be written in the form $\eta=\sum_{x\in\tau(\eta)}s(x)\delta_x$.

As shown in \cite{HKPR}, $\mathbb K(X)\in\mathcal B(\mathbb M(X))$. We denote by $\mathcal B(\mathbb K(X))$ the trace $\sigma$-algebra of $\mathcal B(\mathbb M(X))$ on $\K(X)$.

\begin{proposition}\label{hufuf}
There exists a unique probability measure $\mathcal G$ on 
$(\K(X),\linebreak \mathcal B(\K(X)))$, called the gamma measure, which has Laplace transform
\begin{equation}\label{jkgyutf}\int_{\K(X)}e^{\la \varphi,\eta \ra}\,d\mathcal G(\eta)=\exp\left[-\int_{X}\log(1-\varphi(x))\,dx\right],\quad \varphi\in C_0(X),\ \varphi<1.\end{equation}
\end{proposition}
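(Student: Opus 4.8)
The plan is to realize $\mathcal G$ as the distribution of a gamma random measure built explicitly from a Poisson random measure, and then to obtain uniqueness from the fact that the Laplace functional determines the law of a random measure on a Polish space. For existence, I would fix the $\sigma$-finite intensity $\lambda:=dx\otimes\nu(ds)$ on $X\times(0,\infty)$, where $\nu(ds)=s^{-1}e^{-s}\,ds$, let $\pi$ be a Poisson random measure on $X\times(0,\infty)$ with intensity $\lambda$ (which exists on a suitable probability space), and set
$$\eta:=\int_{X\times(0,\infty)} s\,\delta_x\,\pi(dx,ds)=\sum_{(x,s)\in\pi} s\,\delta_x.$$
The first technical step is to check that $\eta$ takes values in $\K(X)$ almost surely. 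For any compact $A\subset X$ one computes $\mathbb E[\eta(A)]=\int_A\int_0^\infty s\,\nu(ds)\,dx=|A|\int_0^\infty e^{-s}\,ds=|A|<\infty$, so $\eta(A)<\infty$ a.s.; exhausting $X$ by a countable family of balls then shows that $\eta$ is a.s.\ a Radon measure. Since $\pi$ is a.s.\ a countable sum of distinct atoms and $\eta$ charges only their projections onto $X$, the measure $\eta$ is discrete and hence lies in $\K(X)$. I would define $\mathcal G$ to be the law of $\eta$.

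The second step is the Laplace transform computation. For $\varphi\in C_0(X)$ with $\varphi<1$ we have $\la\varphi,\eta\ra=\int_{X\times(0,\infty)} s\varphi(x)\,\pi(dx,ds)$, so the exponential (Campbell) formula for Poisson random measures gives
$$\int_{\K(X)} e^{\la\varphi,\eta\ra}\,d\mathcal G(\eta)=\exp\left[\int_X\int_0^\infty\big(e^{s\varphi(x)}-1\big)\,s^{-1}e^{-s}\,ds\,dx\right].$$
Using the Frullani integral $\int_0^\infty(e^{-as}-e^{-bs})s^{-1}\,ds=\log(b/a)$ with $a=1-\varphi(x)>0$ and $b=1$, the inner integral equals $-\log(1-\varphi(x))$, which yields exactly \eqref{jkgyutf}. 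Here I would take care to justify the use of the exponential formula: since $\varphi$ is compactly supported and bounded above by some $c<1$, the integral $\int_X\int_0^\infty|e^{s\varphi(x)}-1|\,s^{-1}e^{-s}\,ds\,dx$ is finite, so both sides are well defined.

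For uniqueness, I would invoke that the law of a random element of the Polish space $\M(X)$ is determined by its Laplace functional $\eta\mapsto\int e^{-\la f,\eta\ra}\,d\mathcal G(\eta)$ evaluated on nonnegative $f\in C_0(X)$ (a standard determining-class argument, since $\mathcal B(\M(X))$ is generated by the maps $\eta\mapsto\la f,\eta\ra$). As the class $\{\varphi\in C_0(X):\varphi<1\}$ contains every $\varphi=-f$ with $f\in C_0(X)$, $f\ge0$, equation \eqref{jkgyutf} prescribes this Laplace functional completely, so it can hold for at most one probability measure on $(\K(X),\mathcal B(\K(X)))$.

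The main obstacle is the first step: verifying that the formal sum $\eta=\sum s\,\delta_x$ is genuinely a locally finite discrete measure even though, because $\nu$ has infinite total mass ($\int_0^\infty\nu(ds)=\infty$), infinitely many atoms accumulate in every bounded region. This is precisely where the choice of intensity $s^{-1}e^{-s}\,ds$ is essential, through the convergence of $\int_0^\infty s\,\nu(ds)$, which makes the weight contributed by the small masses summable. Once this is settled, the remainder reduces to the Frullani computation and the standard uniqueness theorem for Laplace functionals.
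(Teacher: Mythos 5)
Your construction is essentially identical to the paper's: the paper also realizes $\mathcal G$ as the pushforward under $\gamma=\{(x_i,s_i)\}\mapsto\sum_i s_i\delta_{x_i}$ of the Poisson measure on $X\times(0,\infty)$ with intensity $dx\,s^{-1}e^{-s}\,ds$, verifies local finiteness by the same first-moment computation (via the Mecke identity), and obtains the Laplace transform and uniqueness in the same way. The proposal is correct; the only detail the paper treats more carefully is the a.s.\ ``pinpointing'' property (no two atoms over the same $x$), which it needs to make the map $\mathcal R$ a measurable bijection, but which is not essential for the bare existence and uniqueness claim.
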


We will present a constructive proof of this statement, as it will be used throughout the paper.

\begin{proof}[Proof of Proposition \ref{hufuf}] Denote $\Rp:=(0,\infty)$ and define a metric on $\Rp$ by
$$ d_{\Rp}(s_1,s_2):=\left|\log(s_1)-\log(s_2)\right|,\quad s_1,s_2\in\Rp.$$
Then $\Rp$ becomes a locally compact Polish space, and any set of the form $[a,b]$, with $0<a<b<\infty$, is compact.
We denote $\hat X:=X\times\Rp$ and define the  configuration space over $\hat X$ by
$$\Gamma(\hat X):=\big\{\gamma\subset \hat X\mid |\gamma\cap\Lambda|<\infty\text{ for each compact }\Lambda\subset \hat X\,\big\}.
$$
Here $|\gamma\cap\Lambda|$ denotes the number of points in the set $\gamma\cap\Lambda$.
One can identify a configuration $\gamma\in \Gamma(\hat X)$ with  Radon measure $\sum_{(x,s)\in\gamma}\delta_{(x,s)}$
from $\mathbb M(\hat X)$.
The space $\Gamma(\hat X)$ is endowed with the vague topology, i.e., the weakest topology on $\Gamma(\hat X)$
with respect to which all maps
$$\Gamma(\hat X)\mapsto \langle f,\gamma\rangle:=\int_{\hat X}f(x,s)\,d\gamma(x,s)=\sum_{(x,s)\in\gamma}f(x,s),\quad f\in C_0(\hat X), $$
are continuous.
Let $\mathcal B(\Gamma(\hat X))$ denote the Borel $\sigma$-algebra on $\Gamma(\hat X)$. We denote by $\pi$ the Poisson measure on $(\Gamma(\hat X),\mathcal B(\Gamma(\hat X)))$ with intensity measure
\begin{equation}\label{5}
d\varkappa(x,s):=dx\,d\lambda(s),
\end{equation}
 where
\begin{equation}\label{biulgtfi}d\lambda(s):=\frac{1}{s}\,e^{-s}\,ds.\end{equation}
The measure $\pi$ can be characterized as the unique probability measure on $\Gamma(\hat X)$ which satisfies the Mecke identity: for each measurable function $F:\Gamma(\hat X)\times\hat X\to[0,\infty]$, we have
\begin{multline}\label{Mecke}\int_{\Gamma(\hat X)}d\pi(\gamma)\int_{\hat X}d\gamma(x,s)\,F(\gamma,x,s)\\
=\int _{\Gamma(\hat X)}d\pi(\gamma)\int_{\hat X}d\varkappa(x,s)\,F(\gamma\cup\{(x,s)\},x,s).\end{multline}

Denote by $\Gamma_p(\hat X)$  the set of so-called pinpointing configurations in  $\hat X$. By definition, $\Gamma_p(\hat X)$ consists of all configurations $\gamma\in\Gamma(\hat X)$ such that if $(x_1,s_1),(x_2,s_2)\in\gamma$ and $(x_1,s_1)\ne(x_2,s_2)$, then $x_1\ne x_2$. Thus, a configuration $\gamma\in \Gamma_p(\hat X)$ can not contain two  points $(x,s_1)$ and $(x,s_2)$ with $s_1\ne s_2$. As easily seen, $\Gamma_p(\hat X)\in\mathcal B(\Gamma(\hat X))$. Since the Lebesgue measure $dx$ is non-atomic, the set
$$\big\{(x_1,s_1,x_2,s_2)\in\hat X^2\mid x_1=x_2\big\}$$
is of zero $\varkappa^{\otimes2}$-measure. Denote by $\mathcal B_c(\hat X)$ the set of all Borel measurable sets in $\hat X$ which have compact closure. Fix any $\Lambda\in\mathcal B_0(\hat X)$. Using the distribution of the configuration $\gamma\cap \Lambda$ under $\pi$ (see e.g.\ \cite{Kal}), we conclude that
$$\pi\big(\gamma\in\Gamma(\hat X)\mid \exists (x_1,s_1), (x_2,s_2)\in \gamma\cap\Lambda:\, x_1=x_2,\, s_1\ne s_2\big)=0.$$
Hence, $\pi(\Gamma_p(\hat X))=1$.

For each $\gamma\in\Gamma_p(\hat X)$ and $A\in\mathcal B_c(X)$, we define a local mass by
\begin{equation}\label{iytf86rt}\mathfrak M_A(\gamma):= \int_{\hat X}\chi_A(x)s\,d\gamma(x,s)=\sum_{(x,s)\in\gamma}\chi_A(x)s\in[0,\infty].\end{equation}
Here $\chi_A$ denotes the indicator function of the set $A$.
The set of pinpointing configurations with finite local mass is defined by
$$\Gamma_{pf}(\hat X):=\big\{\gamma\in \Gamma_p(\hat X)\mid \mathfrak M_A(\gamma)<\infty\text{ for each  }A\in\mathcal B_c(X)\big\}.$$
As easily seen, $\Gamma_{pf}(\hat X)\in\mathcal B(\Gamma(\hat X))$ and we denote by $\mathcal B(\Gamma_{pf}(\hat X))$ the trace $\sigma$-algebra of  $\mathcal B(\Gamma(\hat X))$ on $\Gamma_{pf}(\hat X)$. For each $A\in\mathcal B_c(X)$, using the Mecke identity \eqref{Mecke}, we get
$$\int_{\Gamma_p(\hat X)}\mathfrak M_A (\gamma)\,d\pi(\gamma)=\int_{\Gamma_p(\hat X)}d\pi(\gamma)\int_A d\varkappa(x,s)\,s=\int_A dx<\infty.$$
Therefore, $\pi(\Gamma_{pf}(\hat X))=1$ and we can consider $\pi$ as a probability measure on $(\Gamma_{pf}(\hat X),\linebreak[0] \mathcal B(\Gamma_{pf}(\hat X)))$.

We  construct a bijective mapping
$\mathcal R  :\Gamma_{pf}(\hat X)\to\mathbb K(X) $ by setting, for each $\gamma=\{(x_i,s_i)\}\in \Gamma_{pf}(\hat X)$,
$\mathcal R\gamma:=\sum_i s_i\delta_{x_i}\in \mathbb K(X)$.
By \cite[Theorem~6.2]{HKPR}, we have
$$\mathcal B(\mathbb K(X))=\big\{\mathcal R A\mid A\in\mathcal B(\Gamma_{pf}(\hat X))\big\}.$$
Hence, both $\mathcal R$ and its inverse $\mathcal R^{-1}$ are measurable mappings.
We define $\mathcal G$ to be the pushforward of the measure $\pi$ under $\mathcal R$. One can easily check that $\mathcal G$ has Laplace transform \eqref{jkgyutf} and this Laplace transform  uniquely characterizes this measure. 
\end{proof}

\begin{corollary}\label{vjhhfuf}For  each measurable function $F:\K(X)\times X\to[0,\infty]$, we have
\begin{equation}\label{jhigtf8t}
\int_{\K(X)}d\mathcal G(\eta)\int_X d\eta(x)\,F(\eta,x)=\int_{\K(X)}d\mathcal G(\eta)\int_{\hat X}dx\,\,ds\,e^{-s} F(\eta+s\delta_x,x).
\end{equation}
\end{corollary}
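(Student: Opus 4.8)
The plan is to transport the Mecke identity \eqref{Mecke} for the Poisson measure $\pi$ on $\Gamma(\hat X)$ to the cone $\K(X)$ by means of the bijection $\mathcal R:\Gamma_{pf}(\hat X)\to\K(X)$ constructed in the proof of Proposition~\ref{hufuf}. The starting observation is that, for $\gamma=\{(x_i,s_i)\}\in\Gamma_{pf}(\hat X)$ and $\eta=\mathcal R\gamma=\sum_i s_i\delta_{x_i}$, the inner integral on the left-hand side of \eqref{jhigtf8t} is a weighted sum over the atoms of $\eta$, namely
\begin{equation*}
\int_X F(\eta,x)\,d\eta(x)=\sum_i s_i\,F(\mathcal R\gamma,x_i)=\int_{\hat X} s\,F(\mathcal R\gamma,x)\,d\gamma(x,s).
\end{equation*}
Since $\mathcal G$ is by definition the pushforward of $\pi$ under $\mathcal R$, the entire left-hand side of \eqref{jhigtf8t} equals
\begin{equation*}
\int_{\Gamma_{pf}(\hat X)}d\pi(\gamma)\int_{\hat X}d\gamma(x,s)\,\big[\,s\,F(\mathcal R\gamma,x)\,\big].
\end{equation*}

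Next I would apply the Mecke identity \eqref{Mecke} to the nonnegative measurable function $\Phi(\gamma,x,s):=s\,F(\mathcal R\gamma,x)$, which turns the last expression into
\begin{equation*}
\int_{\Gamma_{pf}(\hat X)}d\pi(\gamma)\int_{\hat X}d\varkappa(x,s)\,s\,F\big(\mathcal R(\gamma\cup\{(x,s)\}),x\big),
\end{equation*}
where the outer integration may be restricted to $\Gamma_{pf}(\hat X)$ because $\pi$ is concentrated there. To identify the integrand I would use that $\mathcal R(\gamma\cup\{(x,s)\})=\mathcal R\gamma+s\delta_x=\eta+s\delta_x$, together with the defining relation $d\varkappa(x,s)=dx\,d\lambda(s)=dx\,s^{-1}e^{-s}\,ds$ from \eqref{5}--\eqref{biulgtfi}, which gives $s\,d\varkappa(x,s)=dx\,ds\,e^{-s}$. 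Substituting these and invoking the pushforward relation once more to return to an integral over $\K(X)$ yields precisely the right-hand side of \eqref{jhigtf8t}.

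The one point requiring care — and which I expect to be the main technical obstacle — is the well-definedness of the integrand $\mathcal R(\gamma\cup\{(x,s)\})$ after the application of the Mecke identity, since $\mathcal R$ is defined only on $\Gamma_{pf}(\hat X)$. Thus one must verify that $\gamma\cup\{(x,s)\}\in\Gamma_{pf}(\hat X)$ for $\pi$-almost every $\gamma$ and $\varkappa$-almost every $(x,s)$. This is where the structure of $\varkappa$ enters: because the first marginal $dx$ is non-atomic, for fixed $\gamma$ the added point has, for $\varkappa$-a.e.\ $(x,s)$, an $x$-coordinate distinct from all those already present in $\gamma$, so the pinpointing property is preserved; and adjoining a single point of finite mass $s$ keeps every local mass $\mathfrak M_A$ finite, whence $\gamma\cup\{(x,s)\}$ again lies in $\Gamma_{pf}(\hat X)$. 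On the remaining null set the integrand may be set arbitrarily without affecting the value of the integral. Finally, since $F$ is nonnegative and measurable, Tonelli's theorem justifies every interchange of integration carried out above.
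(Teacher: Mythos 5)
Your proposal is correct and follows essentially the same route as the paper: rewrite the left-hand side as an integral over $\Gamma_{pf}(\hat X)$ against $\pi$ via the pushforward $\mathcal R$, apply the Mecke identity to $s\,F(\mathcal R\gamma,x)$, and use $s\,d\varkappa(x,s)=dx\,ds\,e^{-s}$ together with $\mathcal R(\gamma\cup\{(x,s)\})=\eta+s\delta_x$. Your additional check that $\gamma\cup\{(x,s)\}$ stays in $\Gamma_{pf}(\hat X)$ for a.e.\ $(\gamma,x,s)$ is a point the paper leaves implicit, but it does not change the argument.
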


\begin{proof}
By the proof of Proposition \ref{hufuf} (in particular, using the Mecke identity), we see that the left hand side of \eqref{jhigtf8t} is equal to
\begin{align*}
&\int_{\Gamma_{pf}(\hat X)}d\pi(\gamma)\int_{\hat X}d\gamma(x,s)\, s F(\mathcal R\gamma,x)\\
&\quad=\int_{\Gamma_{pf}(\hat X)}
d\pi(\gamma)\int_{\hat X}d\varkappa(x,s)\,
sF(\mathcal R(\gamma\cup\{(x,s)\}),x),
\end{align*}
which is equal to the right hand side of \eqref{jhigtf8t}.
\end{proof}

\begin{remark}
In fact, identity \eqref{jhigtf8t} uniquely characterizes the gamma measure $\mathcal G$, i.e., if a probability measure $\mu$ on $\K(X)$ satisfies  identity  \eqref{jhigtf8t} with $\mathcal G$ being replaced by $\mu$, then  $\mu=\mathcal G$. See \cite[Theorem~6.3]{HKPR} for a proof of this statement.
\end{remark}

\begin{remark}\label{ufutfr}
By using either the Laplace transform of the gamma measure (formula \eqref{jkgyutf}) or  formula \eqref{jhigtf8t}, one can easily show that the gamma measure has all moments finite, that is, for each  $A\in\mathcal B_c(X)$ and $n\in\N$, we have
\begin{equation}\label{hjugfiy}\int_{\K(X)}\la\chi_A, \eta\ra^n\,d\mathcal G(\eta)=\int_{\K(X)} \eta(A)^n \,d\mathcal G(\eta)<\infty.\end{equation}
\end{remark}

\section{Dirichlet forms}\label{jigtt7i9}

Having arrived at  notions of both a gradient and a tangent space to $\M(X)$, we would like to construct a corresponding Dirichlet form on the space $L^2(\K(X),\mathcal G)$. This, in turn, should lead us, in future, to a diffusion process  on $\K(X)$. In fact, we will consider different types of Dirichlet forms, corresponding to the intrinsic gradient $\nabla^{\mathrm{int}}$, extrinsic gradient $\nabla^{\mathrm{ext}}$, and the full gradient $\nabla^{\M}$. Furthermore, in the case of the intrinsic gradient (full gradient, respectively), we will  use a coefficient in the Dirichlet form which depends on masses only. The sense of this coefficient will become clear below.

A natural candidate for the domain of these bilinear forms (before the closure) seems to be the set $\FCM$, see \eqref{ytf8rtf}. However, as we learnt in \cite{KLV}, the gamma measure does not allow, on this set,  an integration by parts formula  with respect to intrinsic differentiation. In view of this, we will now introduce an alternative set of test functions on $\K(X)$.

Denote by $\mathcal D(\hat X)$ the space of all infinitely differentiable functions on $\hat X$ which have compact support in $\hat X$. In particular, the support of each $\varphi\in\mathcal D(\hat X)$ is a subset of some set $A\times[a,b]$, where $A\in\mathcal B_c(X)$ and $0<a<b<\infty$. We denote by $\FCC$ the set of all cylinder functions $F:\Gamma(\hat X)\to\R$ of the form
\begin{equation}\label{giyfci}F(\gamma)=g(\la \varphi_1,\gamma\ra,\dots,\la \varphi_N,\gamma\ra),
\quad\gamma\in\Gamma(\hat X),
\end{equation}
where $g\in C_{b}^\infty(\R^N)$, $\varphi_1\,\dots,\varphi_N\in\mathcal D(\hat X)$, and $N\in\N$.
Next, we define
\begin{multline*}\FCK\\
 :=\big\{F:\K(X)\to\R\mid F(\eta)=G(\mathcal R^{-1}\eta)\text{ for some }G\in\FCC\big\}.\end{multline*}
For $\varphi\in\mathcal D(\hat X)$ and $\eta\in\K(X)$, we denote
$$ \langle\!\langle \varphi,\eta\rangle\!\rangle:=\la\varphi,\mathcal R^{-1}\eta\ra=\sum_{x\in\tau(\eta)} \varphi(x, s(x)))=\int_X \frac{\varphi(x,s(x))}{s(x)}\,d\eta(x).$$
Then, each function $F\in\FCK$ has the form
\begin{equation}\label{fty6ed6u45} F(\eta)=g\big(\langle\!\langle \varphi_1,\eta\rangle\!\rangle,\dots,\langle\!\langle \varphi_N,\eta\rangle\!\rangle\big),\quad \eta\in\K(X),\end{equation}
with $g,\varphi_1\,\dots,\varphi_N$ and $N$ as in \eqref{giyfci}.

We note that $\FCC$ is a dense subset of $L^2(\Gamma(\hat X),\zeta)$ for any probability measure $\zeta$ on $\Gamma(\hat X)$.
Hence,  $\FCK$ is a dense subset of $L^2(\K(X),\mu)$ for any probability measure $\mu$ on $\K(X)$,  in particular, $\FCK$ is dense in $L^2(\K(X),\mathcal G)$.

For a function $F$ of the form \eqref{fty6ed6u45},
$v\in C_0^\infty(X\to X)$, $h\in C_0(X)$, and $\eta\in\K(X)$,
we easily calculate:
\begin{align*}
&(\nabla_v^{\mathrm{int}}F)(\eta)\\
&=\sum_{i=1}^N(\partial_i g)\big(\langle\!\langle \varphi_1,\eta\rangle\!\rangle,\dots,\langle\!\langle \varphi_N,\eta\rangle\!\rangle\big)\sum_{x\in\tau(\eta)}\la\nabla_y\big|_{y=x} \varphi_i(y,s(x)),v(x)\ra_X\\
&=\sum_{i=1}^N(\partial_i g)\big(\langle\!\langle \varphi_1,\eta\rangle\!\rangle,\dots,\langle\!\langle \varphi_N,\eta\rangle\!\rangle\big)\\
&\qquad\times \int_X \frac1{s(x)}\,\la \nabla_y\big|_{y=x} \varphi_i(y,s(x)),v(x)\ra_X\,d\eta(x),\\
&(\nabla_h^{\mathrm{ext}}F)(\eta)\\
&=\sum_{i=1}^N(\partial_i g)\big(\langle\!\langle \varphi_1,\eta\rangle\!\rangle,\dots,\langle\!\langle \varphi_N,\eta\rangle\!\rangle\big)
\sum_{x\in\tau(\eta)}
\frac{\partial}{\partial u}\Big|_{u=s(x)}\varphi(x,u)s(x)h(x)\\
&=\sum_{i=1}^N(\partial_i g)\big(\langle\!\langle \varphi_1,\eta\rangle\!\rangle,\dots,\langle\!\langle \varphi_N,\eta\rangle\!\rangle\big)
\int_X
\frac{\partial}{\partial u}\Big|_{u=s(x)}\varphi(x,u)h(x) \,d\eta(x).
\end{align*}
Hence,
\begin{align}
(\nabla^{\mathrm{int}}F)(\eta,x)&=\sum_{i=1}^N(\partial_i g)\big(\langle\!\langle \varphi_1,\eta\rangle\!\rangle,\dots,\langle\!\langle \varphi_N,\eta\rangle\!\rangle\big)\frac1{s(x)}\,\nabla_y\big|_{y=x} \varphi_i(y,s(x)),\label{aaaa}\\
(\nabla^{\mathrm{ext}}F)(\eta,x)&=\sum_{i=1}^N(\partial_i g)\big(\langle\!\langle \varphi_1,\eta\rangle\!\rangle,\dots,\langle\!\langle \varphi_N,\eta\rangle\!\rangle\big)\frac{\partial}{\partial u}\Big|_{u=s(x)}\varphi(x,u).\label{bbb}
\end{align}

Let $F:\K(X)\to\R$, $\eta\in\K(X)$, and $x\in\tau(\eta)$. We define
\begin{align}(\nabla^X_xF)(\eta):=&\nabla_y\big|_{y=x}F(\eta-s(x)\delta_x+s(x)\delta_y),\label{uyfruf}\\ (\nabla^{\Rp}_{x})(\eta):=&\frac{d}{du}\Big|_{u=s(x)}F(\eta-s(x)\delta_x+u\delta_x),\label{fr8r}
\end{align}
provided the derivatives on the right hand side of \eqref{uyfruf} and  \eqref{fr8r} exist.
Here the variable $y$ is from $X$, $\nabla_y$ is the usual gradient on $X$ in the $y$ variable, and the variable $u$ is from $\Rp$. The following simple result is proven in \cite{KLV}.

\begin{lemma}\label{vftydfty7u}
For each $F\in\FCK$, $\eta\in\K(X)$, and $x\in\tau(\eta)$, we have
\begin{align}
(\nabla^{\mathrm{int}}F)(\eta,x)&=\frac1{s(x)}\,(\nabla^X_xF)(\eta),\label{iugti}\\
(\nabla^{\mathrm{ext}}F)(\eta,x)&=(\nabla^\Rp_{x}F)(\eta).\label{yiggfutfr}
\end{align}
\end{lemma}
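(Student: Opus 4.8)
The plan is to verify the two identities \eqref{iugti} and \eqref{yiggfutfr} by direct computation, comparing the right-hand sides against the explicit formulas \eqref{aaaa} and \eqref{bbb} already derived for $(\nabla^{\mathrm{int}}F)(\eta,x)$ and $(\nabla^{\mathrm{ext}}F)(\eta,x)$. Throughout I fix $F\in\FCK$ of the form \eqref{fty6ed6u45}, a point $\eta\in\K(X)$, and an atom $x\in\tau(\eta)$ with mass $s(x)$.

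First I would compute $(\nabla^X_x F)(\eta)$ from its definition \eqref{uyfruf}. The measure $\eta-s(x)\delta_x+s(x)\delta_y$ is obtained from $\eta$ by relocating the atom at $x$ (keeping its mass $s(x)$) to the nearby point $y$, so under the pairing $\langle\!\langle\varphi_i,\cdot\rangle\!\rangle$ the only term affected is the one coming from the atom at $x$: it changes from $\varphi_i(x,s(x))$ to $\varphi_i(y,s(x))$, while all other atoms contribute unchanged. Applying the chain rule to $g$ and differentiating in $y$ at $y=x$ yields
\begin{equation*}
(\nabla^X_x F)(\eta)=\sum_{i=1}^N(\partial_i g)\big(\langle\!\langle \varphi_1,\eta\rangle\!\rangle,\dots,\langle\!\langle \varphi_N,\eta\rangle\!\rangle\big)\,\nabla_y\big|_{y=x}\varphi_i(y,s(x)).
\end{equation*}
Dividing by $s(x)$ reproduces exactly the right-hand side of \eqref{aaaa}, which establishes \eqref{iugti}. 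The computation for $(\nabla^{\Rp}_x F)(\eta)$ is entirely analogous: in \eqref{fr8r} the measure $\eta-s(x)\delta_x+u\delta_x$ keeps the atom at $x$ but changes its mass from $s(x)$ to $u$, so the term $\varphi_i(x,s(x))$ in each pairing becomes $\varphi_i(x,u)$; differentiating in $u$ at $u=s(x)$ via the chain rule gives the right-hand side of \eqref{bbb}, proving \eqref{yiggfutfr}.

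The only genuine point requiring care is the bookkeeping of which summand in $\langle\!\langle\varphi_i,\eta\rangle\!\rangle=\sum_{x'\in\tau(\eta)}\varphi_i(x',s(x'))$ is perturbed, together with the justification that one may differentiate under the finite sum defining $g$. Since $\varphi_i\in\mathcal D(\hat X)$ has compact support in $\hat X=X\times\Rp$, only finitely many atoms of $\eta$ contribute to each pairing, and for $y$ near $x$ (respectively $u$ near $s(x)$) the relocated or remassed atom stays within a neighborhood on which $\varphi_i$ is smooth; hence the derivatives exist and the interchange of differentiation with the finite sum is immediate. The smoothness of $g$ then legitimizes the chain rule, and no further analytic subtlety arises, so this is the most routine of the lemmas rather than the main obstacle of the paper.
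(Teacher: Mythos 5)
Your computation is correct, but there is nothing in the paper to compare it against: the text states only that ``the following simple result is proven in \cite{KLV}'' and gives no argument. Your verification --- observe that passing from $\eta$ to $\eta-s(x)\delta_x+s(x)\delta_y$ (resp.\ to $\eta-s(x)\delta_x+u\delta_x$) replaces the single summand $\varphi_i(x,s(x))$ in each pairing $\langle\!\langle\varphi_i,\eta\rangle\!\rangle$ by $\varphi_i(y,s(x))$ (resp.\ $\varphi_i(x,u)$), then apply the chain rule to $g$ and match the result against \eqref{aaaa} and \eqref{bbb} --- is exactly the computation one expects the cited reference to contain, so you have supplied the missing proof rather than deviated from it.

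One point deserves more care than your closing paragraph gives it, and it concerns \eqref{iugti} only. For a $\mathcal G$-typical $\eta$ the support $\tau(\eta)$ is dense in $X$, so arbitrarily close to $x$ there are points $y$ coinciding with other atoms of $\eta$; for such $y$ the measure $\eta-s(x)\delta_x+s(x)\delta_y$ has a single merged atom of mass $s(y)+s(x)$ at $y$, and the pairing contributes $\varphi_i\bigl(y,s(y)+s(x)\bigr)$ rather than $\varphi_i(y,s(x))+\varphi_i(y,s(y))$. Taken literally, the function $y\mapsto F(\eta-s(x)\delta_x+s(x)\delta_y)$ is therefore discontinuous on a countable set that may be dense near $x$, and the derivative in \eqref{uyfruf} exists only after one replaces this function by its continuous extension from the complement of that countable set --- precisely the convention the paper invokes explicitly later, in the proof of Theorem~\ref{ydred6e6r}, for $F(\gamma+\delta_{(x,s)})$. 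Your justification addresses the smoothness of $\varphi_i$ and the finiteness of the sum, but not this collision issue; it is a matter of how \eqref{uyfruf} is to be read rather than a flaw in your calculation, yet it should be stated. No such issue arises for \eqref{yiggfutfr}, since in \eqref{fr8r} the perturbed atom never leaves $x$.
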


We fix a measurable function $c:\Rp\to[0,\infty)$ which is locally bounded.
We define symmetric bilinear forms on $L^2(\K(X),\mathcal G)$ by
\begin{align}
&\mathcal E^{\mathrm{int}}(F,G):=\int_{\K(X)}\la (\nabla^{\mathrm{int}} F)(\eta),c(s(\cdot))(\nabla^{\mathrm{int}} G)(\eta)\ra_{T_\eta^{\mathrm{int}}(\M)}\,d\mathcal G(\eta),\notag\\
&\quad= \int_{\K(X)}d\mathcal G(\eta)\,\int_{ X}d\eta(x)\,\big\la (\nabla^{\mathrm{int}} F)(\eta,x), \, c(s(x))(\nabla^{\mathrm{int}} G)(\eta,x)\ra_X\,,\label{uftuu7r}\\
&\mathcal E^{\mathrm{ext}}(F,G):=\int_{\K(X)}\la (\nabla^{\mathrm{ext}} F)(\eta),(\nabla^{\mathrm{ext}} G)(\eta)\ra_{T_\eta^{\mathrm{ext}}(\M)}\,d\mathcal G(\eta),\label{nbkitr86tr}\\
&\mathcal E^\M(F,G):=\mathcal E^{\mathrm{int}}(F,G)+\mathcal E^{\mathrm{ext}}(F,G),\label{ftfttfftu}
\end{align}
where $F,G\in \FCK$.
It follows from formulas \eqref{aaaa} and   \eqref{bbb} that, for each $F\in \FCK$, there exist a constant $C_1>0$,   a set $A\in\mathcal B_c(X)$
and an interval $[a,b]$ with $0<a<b<\infty$
such that
 \begin{multline}\label{lkgtiuotg}\max\{\|\nabla^{\mathrm{int}}F(\eta,x)\|_X,|\nabla^{\mathrm{ext}}F(\eta,x)|\}\le C_1\,\chi_A(x)\,\chi_{[a,b]}(s(x)),\\ \eta\in\K(X),\ x\in\tau(\eta).\end{multline}
 Since the function $c$ is locally bounded, there exists a constant $C_2>0$ such that
\begin{equation}\label{yulful7fr}
c(s(x)) \chi_{[a,b]}(s(x))\le C_2,\quad \eta\in\K(X),\ x\in\tau(\eta).
\end{equation}
Therefore, by \eqref{hjugfiy}, \eqref{lkgtiuotg},   and \eqref{yulful7fr},   the  integrals in \eqref{uftuu7r} and \eqref{nbkitr86tr} indeed make sense and are finite for any $F,G\in\FCK$.

Using Lemma~\ref{vftydfty7u}, we may also give an equivalent representation of the bilinear forms $\mathcal E^{\mathrm{int}}$, $\mathcal E^{\mathrm{ext}}$.

\begin{lemma}\label{gufu7} For any $F,G\in \FCK$,
\begin{align}
&\mathcal E^{\mathrm{int}}(F,G)=\int_{\K(X)}d\mathcal G(\eta)\int_{\hat X}dx\,ds\,e^{-s}\,\frac{c(s)}{s^2}
\big\la\nabla_x
F(\eta+s\delta_x),\nabla_x
G(\eta+s\delta_x)\big\ra_X\,,\label{ytdey6}\\
&\mathcal E^{\mathrm{ext}}(F,G)=\int_{\K(X)}d\mathcal G(\eta)\int_{\hat X}dx\,ds\,e^{-s}\left(\frac{d}{ds}
\,F(\eta+s\delta_x)\right)\left(\frac{d}{ds}
G(\eta+s\delta_x)\right).\label{itftr68y}
\end{align}
\end{lemma}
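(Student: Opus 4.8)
The plan is to reduce both identities to a single application of the Mecke-type identity for the gamma measure, Corollary~\ref{vjhhfuf}, after rewriting the intrinsic and extrinsic gradients by means of Lemma~\ref{vftydfty7u}.

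First I would substitute \eqref{iugti} into the definition \eqref{uftuu7r}. Since $(\nabla^{\mathrm{int}}F)(\eta,x)=\frac1{s(x)}(\nabla^X_xF)(\eta)$, the integrand of \eqref{uftuu7r} becomes $\frac{c(s(x))}{s(x)^2}\,\langle(\nabla^X_xF)(\eta),(\nabla^X_xG)(\eta)\rangle_X$; likewise \eqref{yiggfutfr} turns the integrand of \eqref{nbkitr86tr} into $(\nabla^\Rp_xF)(\eta)\,(\nabla^\Rp_xG)(\eta)$. Each resulting expression has the form $\int_{\K(X)}d\mathcal G(\eta)\int_X d\eta(x)\,\Phi(\eta,x)$, to which I want to apply \eqref{jhigtf8t}. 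The only obstruction is that Corollary~\ref{vjhhfuf} is stated for nonnegative $\Phi$, whereas for $F\ne G$ the integrand changes sign. I resolve this by polarization: both sides of each claimed identity are symmetric bilinear forms in $(F,G)$ on the vector space $\FCK$ (which is closed under linear combinations, since a sum of cylinder functions of the form \eqref{fty6ed6u45} is again of this form with a larger $N$ and a suitable $g\in C_b^\infty$), so it suffices to verify the two identities on the diagonal $F=G$, where the integrands $\frac{c(s(x))}{s(x)^2}\|(\nabla^X_xF)(\eta)\|_X^2$ and $((\nabla^\Rp_xF)(\eta))^2$ are nonnegative and where the bounds \eqref{hjugfiy}, \eqref{lkgtiuotg}, \eqref{yulful7fr} secured before the statement guarantee finiteness of all integrals involved.

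On the diagonal I apply \eqref{jhigtf8t} directly. This replaces $\int_X d\eta(x)$ by $\int_{\hat X}dx\,ds\,e^{-s}$ and substitutes $\eta+s\delta_x$ for $\eta$ inside $\Phi$, producing $\int_{\K(X)}d\mathcal G(\eta)\int_{\hat X}dx\,ds\,e^{-s}\,\frac{c(s)}{s^2}\,\|(\nabla^X_xF)(\eta+s\delta_x)\|_X^2$ and its extrinsic analogue. It then remains to identify $(\nabla^X_xF)(\eta+s\delta_x)$ and $(\nabla^\Rp_xF)(\eta+s\delta_x)$. Here I use that $dx$ is non-atomic while $\tau(\eta)$ is countable, so for $dx$-a.e.\ $x$ we have $x\notin\tau(\eta)$; consequently $\eta+s\delta_x$ carries a fresh atom at $x$ of mass exactly $s$, i.e.\ $s(x)=s$ in this configuration. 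Feeding this into the definitions \eqref{uyfruf} and \eqref{fr8r} gives $(\nabla^X_xF)(\eta+s\delta_x)=\nabla_y|_{y=x}F(\eta+s\delta_y)=\nabla_xF(\eta+s\delta_x)$ and $(\nabla^\Rp_xF)(\eta+s\delta_x)=\frac{d}{du}|_{u=s}F(\eta+u\delta_x)=\frac{d}{ds}F(\eta+s\delta_x)$, which are precisely the quantities in \eqref{ytdey6} and \eqref{itftr68y}. Undoing the polarization then restores the bilinear identities for arbitrary $F,G\in\FCK$.

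The main obstacle is the bookkeeping in this last step rather than any deep difficulty: one must check that $(\nabla^X_xF)(\eta+s\delta_x)$ genuinely collapses to the plain position-gradient $\nabla_xF(\eta+s\delta_x)$, which hinges exactly on the a.e.\ identification $s(x)=s$ of the added mass and on discarding the $dx$-null set $\{x\in\tau(\eta)\}$. The sign issue in invoking Corollary~\ref{vjhhfuf} is disposed of cleanly by the polarization reduction to $F=G$, and every interchange of integration is legitimated by the finiteness already established before the lemma.
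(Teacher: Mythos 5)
Your proposal is correct and follows the same route as the paper, whose proof is a one-line citation of exactly the ingredients you use: Corollary~\ref{vjhhfuf}, Lemma~\ref{vftydfty7u}, and the definitions \eqref{uftuu7r}--\eqref{nbkitr86tr}. The details you supply (polarization to handle the nonnegativity hypothesis in Corollary~\ref{vjhhfuf}, and the $dx$-a.e.\ identification of the added atom's mass with $s$) are precisely what the paper leaves implicit.
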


\begin{proof} Formulas \eqref{ytdey6}, \eqref{itftr68y} directly follow  from Corollary \ref{vjhhfuf}, Lemma~\ref{vftydfty7u}, and formulas \eqref{uftuu7r}, \eqref{nbkitr86tr}.
\end{proof}

The lemma below shows that the introduced symmetric bilinear forms are well defined on $L^2(\K(X),\mathcal G)$.

\begin{lemma}\label{oACgI} Let $F,G\in \FCK$ and let $F=0$ $\mathcal G$-a.e. Then $\mathcal E^\sharp(F,G)
=0$, $\sharp=\mathrm{int}, \mathrm{ext}, \M$.
\end{lemma}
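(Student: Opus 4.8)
\section*{Proof proposal}

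The plan is to exploit the representations \eqref{ytdey6} and \eqref{itftr68y} supplied by Lemma~\ref{gufu7}, in which $F$ enters only through the function $(x,s)\mapsto F(\eta+s\delta_x)$ and its derivatives. The genuine danger is that $F=0$ $\mathcal G$-a.e.\ does not by itself force the \emph{gradients} appearing in these integrals to vanish: differentiation is a local operation and could in principle detect the behaviour of $F$ on the $\mathcal G$-null set where it is nonzero. The whole argument therefore rests on showing that, after the change of variables built into Lemma~\ref{gufu7}, the relevant function of $(x,s)$ is genuinely smooth and vanishes almost everywhere, so that its derivatives vanish identically.

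First I would transport the null-set information through the Mecke-type identity of Corollary~\ref{vjhhfuf}. Set $N:=\{\eta\in\K(X)\mid F(\eta)\neq0\}$, so that $\mathcal G(N)=0$. Applying Corollary~\ref{vjhhfuf} to the function $(\eta,x)\mapsto\chi_N(\eta)\chi_A(x)$ with $A\in\mathcal B_c(X)$, the left-hand side equals $\int_{\K(X)}\chi_N(\eta)\eta(A)\,d\mathcal G(\eta)=0$ (the integrand vanishes $\mathcal G$-a.e.\ and $\eta(A)<\infty$ $\mathcal G$-a.e.\ by Remark~\ref{ufutfr}), whence
\begin{equation*}
\int_{\K(X)}d\mathcal G(\eta)\int_{\hat X}dx\,ds\,e^{-s}\,\chi_N(\eta+s\delta_x)\chi_A(x)=0 .
\end{equation*}
Letting $A\uparrow X$ (monotone convergence) yields that $F(\eta+s\delta_x)=0$ for $d\mathcal G(\eta)\,dx\,ds\,e^{-s}$-almost every $(\eta,x,s)$. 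By Fubini, for $\mathcal G$-a.e.\ $\eta$ the function $\Psi_\eta(x,s):=F(\eta+s\delta_x)$ vanishes for Lebesgue-a.e.\ $(x,s)\in\hat X$.

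The decisive step is to observe that, for fixed $\eta$, $\Psi_\eta$ admits a smooth representative. Indeed, for every $x$ outside the countable (hence Lebesgue-null) set $\tau(\eta)$ one has $\langle\!\langle\varphi_i,\eta+s\delta_x\rangle\!\rangle=\langle\!\langle\varphi_i,\eta\rangle\!\rangle+\varphi_i(x,s)$, so by \eqref{fty6ed6u45}
\begin{equation*}
\Psi_\eta(x,s)=g\big(\langle\!\langle\varphi_1,\eta\rangle\!\rangle+\varphi_1(x,s),\dots,\langle\!\langle\varphi_N,\eta\rangle\!\rangle+\varphi_N(x,s)\big),
\end{equation*}
which extends to a genuinely $C^\infty$ function of $(x,s)\in\hat X$, since $g\in C_b^\infty(\R^N)$, the $\varphi_i\in\mathcal D(\hat X)$, and the numbers $\langle\!\langle\varphi_i,\eta\rangle\!\rangle$ are finite constants. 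A continuous function vanishing Lebesgue-a.e.\ vanishes identically, so $\Psi_\eta\equiv0$ on $\hat X$, and therefore $\nabla_x\Psi_\eta\equiv0$ and $\partial_s\Psi_\eta\equiv0$. Here the density of $\tau(\eta)$ in $X$ — which is exactly what prevents $y\mapsto F(\eta+s\delta_y)$ from being literally smooth — is harmless: by Lemma~\ref{vftydfty7u} together with the explicit formulas \eqref{aaaa} and \eqref{bbb}, the integrands $\nabla_xF(\eta+s\delta_x)$ and $\tfrac{d}{ds}F(\eta+s\delta_x)$ occurring in \eqref{ytdey6} and \eqref{itftr68y} coincide, for a.e.\ $x$, with $\nabla_x\Psi_\eta(x,s)$ and $\partial_s\Psi_\eta(x,s)$, i.e.\ with the derivatives of the smooth extension $\Psi_\eta$.

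Finally I would substitute these vanishing gradients back into \eqref{ytdey6} and \eqref{itftr68y}. Since $\nabla_xF(\eta+s\delta_x)=0$ and $\tfrac{d}{ds}F(\eta+s\delta_x)=0$ for $d\mathcal G(\eta)\,dx\,ds\,e^{-s}$-a.e.\ $(\eta,x,s)$, while the remaining factors involving $G$, $c(s)/s^2$ and $e^{-s}$ are finite a.e.\ and bounded on the relevant support by \eqref{lkgtiuotg}--\eqref{yulful7fr}, both integrands vanish a.e. Hence $\mathcal E^{\mathrm{int}}(F,G)=\mathcal E^{\mathrm{ext}}(F,G)=0$, and consequently $\mathcal E^{\M}(F,G)=0$ by \eqref{ftfttfftu}. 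As indicated, the main obstacle is the third step: reconciling the locality of the gradient with a mere almost-everywhere vanishing of $F$. This is resolved precisely by passing, via Corollary~\ref{vjhhfuf} and the cylinder form \eqref{fty6ed6u45} of $F$, to the smooth $(x,s)$-representation $\Psi_\eta$, for which ``vanishing a.e.'' does upgrade to ``vanishing together with all derivatives''.
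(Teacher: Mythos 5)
Your argument is correct and follows essentially the same route as the paper: apply the Mecke-type identity of Corollary~\ref{vjhhfuf} to conclude that $F(\eta+s\delta_x)=0$ for $d\mathcal G(\eta)\,dx\,ds$-a.e.\ $(\eta,x,s)$, and then feed this into the representations of Lemma~\ref{gufu7}. Your additional step — exhibiting the smooth representative $\Psi_\eta(x,s)=g\big(\langle\!\langle\varphi_1,\eta\rangle\!\rangle+\varphi_1(x,s),\dots\big)$ so that a.e.\ vanishing upgrades to vanishing of the derivatives — is precisely the detail the paper compresses into ``the statement easily follows.''
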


\begin{proof}
For each $A\in\mathcal B_c(X)$, making use of  Corollary \ref{vjhhfuf}, we get
$$\int_{\K(X)}d\mathcal G(\eta)\int_{\hat X}dx\,ds\,e^{-s} |F(\eta+s\delta_x)|\chi_A(x)=
\int_{\K(X)}d\mathcal G(\eta)\,|F(\eta)|\,\eta(A)=0.$$
Hence $F(\eta+s\delta_x)=0$ $d\mathcal G(\eta)\,dx\,ds$-a.e. on $\K(X)\times\hat X$. From here and Lemma~\ref{gufu7}, the statement  easily follows.
\end{proof}

\begin{lemma}\label{kjhoiy}
For $\sharp=\mathrm{int}, \mathrm{ext}, \M$, the bilinear form $(\mathcal E^\sharp,\FCK)$
 is a pre-Dirichlet form on $L^2(\K(X),\mathcal G)$ (i.e., if it  is closable, then its closure  is a Dirichlet form).
\end{lemma}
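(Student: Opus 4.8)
The plan is to verify that each of these symmetric forms satisfies the Markovian contraction property on the core $\FCK$, in the precise form that guarantees that the closure (whenever the form is closable) is a Dirichlet form. Concretely, I would invoke the standard criterion for symmetric forms (see, e.g., \cite{MR}): it suffices to show that for every $\varepsilon>0$ there is a function $\phi_\varepsilon\in C^\infty(\R)$ with $\phi_\varepsilon(t)=t$ for $t\in[0,1]$, with $-\varepsilon\le\phi_\varepsilon\le 1+\varepsilon$ on all of $\R$, and with $0\le\phi_\varepsilon'\le 1$, such that $\phi_\varepsilon(F)\in\FCK$ for every $F\in\FCK$ and $\mathcal E^\sharp(\phi_\varepsilon(F),\phi_\varepsilon(F))\le\mathcal E^\sharp(F,F)$. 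Such $\phi_\varepsilon$ is obtained by smoothing the unit contraction $t\mapsto(0\vee t)\wedge 1$ outside $[0,1]$, and one can arrange that all derivatives of $\phi_\varepsilon$ are bounded. Letting $\varepsilon\downarrow0$ then transfers the estimate to the genuine unit contraction inside the closed form and yields the Markovian property of the closure.

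The two things to check are that $\phi_\varepsilon$ preserves the domain and that the gradients obey the chain rule, and both are immediate from the cylinder structure. Indeed, for $F$ of the form \eqref{fty6ed6u45}, namely $F(\eta)=g(\langle\!\langle\varphi_1,\eta\rangle\!\rangle,\dots,\langle\!\langle\varphi_N,\eta\rangle\!\rangle)$, we have $\phi_\varepsilon(F)=(\phi_\varepsilon\circ g)(\langle\!\langle\varphi_1,\eta\rangle\!\rangle,\dots,\langle\!\langle\varphi_N,\eta\rangle\!\rangle)$; since $\phi_\varepsilon\circ g\in C_b^\infty(\R^N)$, indeed $\phi_\varepsilon(F)\in\FCK$. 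Moreover, the explicit expressions \eqref{aaaa} and \eqref{bbb}, combined with $\partial_i(\phi_\varepsilon\circ g)=(\phi_\varepsilon'\circ g)\,\partial_i g$, give the chain rule $(\nabla^\sharp\phi_\varepsilon(F))(\eta,x)=\phi_\varepsilon'(F(\eta))\,(\nabla^\sharp F)(\eta,x)$ for $\sharp=\mathrm{int},\mathrm{ext}$, and hence also for the full gradient by additivity.

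It then remains to substitute this chain rule into the definitions \eqref{uftuu7r}, \eqref{nbkitr86tr}, \eqref{ftfttfftu}. Using that $0\le\phi_\varepsilon'(F(\eta))^2\le 1$ while $c\ge 0$, the (nonnegative) integrand of $\mathcal E^\sharp(\phi_\varepsilon(F),\phi_\varepsilon(F))$ is pointwise dominated by that of $\mathcal E^\sharp(F,F)$, and integration gives $\mathcal E^\sharp(\phi_\varepsilon(F),\phi_\varepsilon(F))\le\mathcal E^\sharp(F,F)$; this is exactly where nonnegativity of the coefficient $c$ enters. I do not expect a serious obstacle: once the abstract criterion is in place, the argument reduces to the chain rule and a pointwise domination of nonnegative integrands. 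The only mildly technical point is the passage $\varepsilon\downarrow0$, which is standard and is precisely the content of the cited criterion, so I would simply quote it rather than reprove it.
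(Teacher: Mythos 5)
Your proposal is correct and follows essentially the same route as the paper: the paper's proof consists of the single remark that the claim follows ``by standard methods'' from \cite[Chap.~I, Proposition 4.10]{MR}, and what you have written is precisely that standard verification (smoothed unit contractions, the chain rule for $\nabla^{\mathrm{int}}$ and $\nabla^{\mathrm{ext}}$ on cylinder functions via \eqref{aaaa}--\eqref{bbb}, and pointwise domination of the nonnegative integrands using $0\le\phi_\varepsilon'\le1$ and $c\ge0$). No gap; you have simply made explicit what the paper leaves to the reader.
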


\begin{proof}
The assertion follows, by standard methods, directly from \cite[Chap.~I, Proposition 4.10]{MR} (see also \cite[Chap.~II, Exercise 2.7]{MR}).
\end{proof}

Analogously to \eqref{uyfruf}, \eqref{fr8r}, we define, for a function $F:\K(X)\to\R$, $\eta\in\K(X)$, and $x\in\tau(\eta)$,
\begin{align}
&(\Delta^X_{x}F)(\eta):= \Delta_{y}
\big|_{y=x} F(\eta- s(x)\delta_x+s(x)\delta_y),\label{kZFU}\\
	&(\Delta^\Rp_{x}F)(\eta):=\left(\frac{d^2}{du^2}-\frac{d}{du}\right)\Big|_{u=s(x)} F(\eta- s(x)\delta_x+u\delta_x).\label{giyfyuuy}
\end{align}
Here and below, $\Delta$ denotes the usual Laplacian on $X$ ($\Delta_y$ denoting the Laplacian in the $y$ variable).
Explicitly, for  a function $F\in\FCK$ of the form  \eqref{fty6ed6u45}, we get
\begin{align}
&(\Delta^X_{x}F)(\eta)=\sum_{i,j=1}^N (\partial_i\partial_jg)(\la\!\la \varphi_1,\eta\ra\!\ra,\dots,\la\!\la \varphi_N,\eta\ra\!\ra)\notag\\
&\quad \times \la \nabla_y\big|_{y=x} \varphi_i(y,s(\eta, x)),\nabla_y\big|_{y=x} \varphi_j(y,s(x))\ra_X\notag\\
&\quad+
\sum_{i=1}^N (\partial_i g)(\la\!\la \varphi_1,\eta\ra\!\ra,\dots,\la\!\la \varphi_N,\eta\ra\!\ra)\Delta_y\big|_{y=x} \varphi_i(y,s(x)),\label{jfutf}
\end{align}
and similarly, we calculate $(\Delta^\Rp_{x}F)(\eta)$.

\begin{proposition}\label{hiftr8iltrl}  For each $F\in\FCK$, we define
\begin{align}
&(L^{\mathrm{int}}F)(\eta):=\int_X d\eta(x)\,\frac{c(s(x))}{s(x)^2}\,(\Delta^X_x F)(\eta),\label{vfyufdy}\\
&(L^{\mathrm{ext}}F)(\eta):=\int_X d\eta(x)\,(\Delta^\Rp_{x} F)(\eta),\quad \eta\in\K(X),\label{bfuyf}\\
&L^{\M}F:=L_1^{\mathrm{int}}F+L_1^{\mathrm{ext}}F.\label{tye65e}
\end{align}
Then, for $\sharp=\mathrm{int}, \mathrm{ext}, \M$, $(L^{\sharp},\FCK)$ is a symmetric operator in $L^2(\K(X),\mathcal G)$ which satisfies
$$\mathcal E^\sharp(F,G)=(-L^\sharp F,G)_{L^2(\K(X),\mathcal G)}\,,\quad F,G\in\FCK.$$
The bilinear  form $(\mathcal E^\sharp, \FCK)$ is closable on $L^2(\K(X),\mathcal G)$ and its closure, denoted by  $(\mathcal E^\sharp,D(\mathcal E^\sharp))$, is a Dirichlet form. The operator $$(-L^\sharp,\FCK)$$
has Friedrichs' extension, which we denote by $(-L^\sharp, D(L^\sharp))$.
\end{proposition}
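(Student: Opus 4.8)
The plan is to reduce everything to a single Green (integration-by-parts) identity
$$\mathcal E^\sharp(F,G)=(-L^\sharp F,G)_{L^2(\K(X),\mathcal G)},\qquad F,G\in\FCK,\ \sharp=\mathrm{int},\mathrm{ext},\M,$$
and then to read off symmetry, the $L^2$-realization of $L^\sharp$, closability, the Dirichlet property, and the Friedrichs extension from it together with the lemmas already at hand. The natural starting point is Lemma~\ref{gufu7}, because there the intrinsic form \eqref{ytdey6} carries a genuine position-Laplacian integrand in $x\in X$, the extrinsic form \eqref{itftr68y} carries a one-dimensional integrand in the mass variable $s$, and both are taken against the common measure $d\mathcal G(\eta)\,dx\,ds\,e^{-s}$, which is exactly the measure appearing in Corollary~\ref{vjhhfuf}.

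For the intrinsic form I would fix $\eta$ and $s$ and integrate by parts in $x$ in \eqref{ytdey6}. For $F,G\in\FCK$ the maps $x\mapsto F(\eta+s\delta_x)$ and $x\mapsto G(\eta+s\delta_x)$ are smooth and, by the compact support of the defining functions $\varphi_i$, compactly supported in $x$; hence there is no boundary contribution and
$$\int_X dx\,\la\nabla_x F(\eta+s\delta_x),\nabla_x G(\eta+s\delta_x)\ra_X=-\int_X dx\,\big(\Delta_x F(\eta+s\delta_x)\big)\,G(\eta+s\delta_x),$$
the weight $e^{-s}c(s)/s^2$ being untouched. Since $\tau(\eta)$ is at most countable we have $x\notin\tau(\eta)$ for $dx$-a.e.\ $x$, so the mass of $\eta+s\delta_x$ at $x$ equals $s$; comparing with the definitions \eqref{kZFU} and the explicit formula \eqref{jfutf} gives $\Delta_x F(\eta+s\delta_x)=(\Delta^X_x F)(\eta+s\delta_x)$. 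Applying Corollary~\ref{vjhhfuf} in the reverse direction to $\Phi(\eta,x)=\frac{c(s(x))}{s(x)^2}(\Delta^X_xF)(\eta)\,G(\eta)$ converts the $d\mathcal G\,dx\,ds\,e^{-s}$-integral back into $\int_{\K(X)}d\mathcal G(\eta)\int_X d\eta(x)$, which by \eqref{vfyufdy} is precisely $(L^{\mathrm{int}}F,G)_{L^2}$. The extrinsic form is treated identically from \eqref{itftr68y}: for fixed $\eta,x$ I integrate by parts in $s\in(0,\infty)$ against $e^{-s}\,ds$, using $\frac{d}{ds}(e^{-s}f'(s))=e^{-s}(f''(s)-f'(s))$ with $f(s)=F(\eta+s\delta_x)$; the boundary terms vanish because $s\mapsto F(\eta+s\delta_x)$ is supported in a compact subinterval of $(0,\infty)$, and $\frac{d^2}{ds^2}-\frac{d}{ds}$ is exactly the operator in \eqref{giyfyuuy}, so Corollary~\ref{vjhhfuf} again yields $(L^{\mathrm{ext}}F,G)_{L^2}$. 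The full case follows by adding the two identities and invoking \eqref{tye65e}.

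With the Green identity established, the remaining assertions are structural. First, $L^\sharp F\in L^2(\K(X),\mathcal G)$ for every $F\in\FCK$: from \eqref{jfutf} (and its $\Delta^\Rp_x$-analogue), the local boundedness of $c$, a compact-support bound of the type \eqref{lkgtiuotg}--\eqref{yulful7fr}, and the finiteness of all moments \eqref{hjugfiy}, one bounds $\|L^\sharp F\|_{L^2}$ exactly as the integrals in \eqref{uftuu7r}--\eqref{nbkitr86tr} were shown to be finite. Symmetry of $(L^\sharp,\FCK)$ is then immediate from the Green identity and the manifest symmetry of $\mathcal E^\sharp$ in $F,G$. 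Moreover $(-L^\sharp F,F)_{L^2}=\mathcal E^\sharp(F,F)\ge0$, so $-L^\sharp$ is a densely defined, non-negative symmetric operator. By the classical Friedrichs construction, any such operator generates a closable form whose closure is the form of its Friedrichs extension; this simultaneously gives the closability of $(\mathcal E^\sharp,\FCK)$ and the existence of the Friedrichs extension $(-L^\sharp,D(L^\sharp))$. Finally, since $(\mathcal E^\sharp,\FCK)$ is a pre-Dirichlet form by Lemma~\ref{kjhoiy}, its closure $(\mathcal E^\sharp,D(\mathcal E^\sharp))$ is automatically a Dirichlet form.

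The main obstacle is the bookkeeping in the two integration-by-parts steps: one must verify that the boundary terms genuinely vanish (compact support in $x$, and support bounded away from $0$ and $\infty$ in $s$), and that the differentiated cylinder expressions coincide with $(\Delta^X_xF)(\eta+s\delta_x)$ and $(\Delta^\Rp_xF)(\eta+s\delta_x)$, which rests on the $dx$-a.e.\ identity $s(x)=s$ for the measure $\eta+s\delta_x$ together with \eqref{jfutf}. A secondary technical point is the $L^2$-estimate for $L^{\mathrm{int}}F$, where the singular factor $1/s^2$ must be controlled; this is exactly where confining $s$ to a compact interval $[a,b]\subset(0,\infty)$ via the compact support of the $\varphi_i$ is essential, so that the weight $c(s)/s^2$ stays bounded.
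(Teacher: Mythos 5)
Your proposal is correct and follows essentially the same route as the paper: integration by parts in $x$ (resp.\ $s$) inside the representation of Lemma~\ref{gufu7}, conversion back via Corollary~\ref{vjhhfuf}, verification that $L^\sharp F\in L^2(\K(X),\mathcal G)$ from \eqref{jfutf}, the local boundedness of $c$ and \eqref{hjugfiy}, and then the standard facts that a non-negative symmetric operator representing the form yields closability, the Friedrichs extension, and (with Lemma~\ref{kjhoiy}) the Dirichlet property. One small imprecision: it is the derivative $s\mapsto\frac{d}{ds}F(\eta+s\delta_x)$, not $F(\eta+s\delta_x)$ itself, that is compactly supported in $s$ (the latter tends to $F(\eta)\ne0$ in general), but this is exactly what makes the boundary term $e^{-s}\frac{d}{ds}F(\eta+s\delta_x)\,G(\eta+s\delta_x)$ vanish, so the argument is unaffected.
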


\begin{proof} We first note that, for a fixed $F\in\FCK$, there exist $A\in\mathcal B_c(X)$ and an interval $[a,b]$ with $0<a<b<\infty$ such that the functions
$$\hat X\ni(x,s)\mapsto \nabla_x F(\eta+s\delta_x),\quad  \hat X\ni(x,s)\mapsto \frac{d}{ds}\, F(\eta+s\delta_x)$$
vanish outside the set $A\times[a,b]$.
Let $\sharp=\mathrm{int}$ and let $F,G\in\FCK$. Using  Lemma~\ref{gufu7} and integrating by parts in the $x$ variable, we
get
\begin{equation}\label{kjg]t8i}
\mathcal E^{\mathrm{int}}(F,G)=\int_{\K(X)}d\mathcal G(\eta)\int_{\hat X}dx\,ds\,e^{-s}\,\frac{c(s)}{s^2}
\big(-\Delta_x
F(\eta+s\delta_x)\big)
G(\eta+s\delta_x).
\end{equation}
Note that, for $F$ of the form \eqref{fty6ed6u45}, we have
\begin{align}
&(\Delta_{x}F)(\eta+s\delta_x)\notag\\
&=\sum_{i,j=1}^N (\partial_i\partial_jg)\big(\la\!\la \varphi_1,\eta\ra\!\ra+\varphi_1(x,s),\dots,\la\!\la \varphi_N,\eta\ra\!\ra+\varphi_N(x,s)\big)\\
&\quad\times \la \nabla_x \varphi_i(x,s),\nabla_x \varphi_j(x,s)\ra_X\notag\\
&\quad+
\sum_{i=1}^N (\partial_ig)\big(\la\!\la \varphi_1,\eta\ra\!\ra+\varphi_1(x,s),\dots,\la\!\la \varphi_N,\eta\ra\!\ra+\varphi_N(x,s)\big)\Delta_x \varphi_i(x,s).\label{buyfrt67ur}
\end{align}
 Hence,  the function under the sign of integral on the right hand side of  \eqref{kjg]t8i} is integrable.
By Corollary~\ref{vjhhfuf},  \eqref{jfutf}, \eqref{vfyufdy},  \eqref{kjg]t8i}, and \eqref{buyfrt67ur},
we get
\begin{align}
\mathcal E^{\mathrm{int}}(F,G)&=\int_{\K(X)}d\mathcal G(\eta)\int_{\hat X}d\eta(x)\,\frac{c(s(x))}{s(x)^2}\,(-\Delta^X_x F)(\eta,x)G(\eta)\notag\\
&=\int_{\K(X)} (-L^{\mathrm{int}}F)(\eta)G(\eta)\,d\mathcal G(\eta).\label{rdee54w45}
\end{align}
 By \eqref{jfutf} and the local boundedness of the function $c$, there exist $C_3>0$ and $A\in\mathcal B_0(X)$ such that
$$\frac{c(s(x))}{s(x)^2}\,|(\Delta_x F)(\eta)|\le C_3 \chi_A(x),\quad \eta\in\K(X),\ x\in\tau(\eta).$$
Hence, by \eqref{hjugfiy} and \eqref{vfyufdy}, we get  $L^{\mathrm{int}}F\in L^2(\K(X),\mathcal G)$. Thus,  the bilinear form  $(\mathcal E^{\mathrm{int}},\FCK)$ has $L^2$-generator. Hence, the statement of the proposition regarding $\sharp=\mathrm{int}$ holds.

    The proof for $\sharp=\mathrm{ext}$ (and so also for $\sharp=\M$) is similar.
\end{proof}

\begin{remark}\label{giyur7675}
Let us quickly note some natural choices of the coefficient function $c(s)$. Choosing $c(s)=1$,  the intrinsic Dirichlet form becomes the closure of the bilinear form
$$ \mathcal E^{\mathrm{int}}(F,G):=\int_{\K(X)}\la (\nabla^{\mathrm{int}} F)(\eta),(\nabla^{\mathrm{int}} G)(\eta)\ra_{T_\eta^{\mathrm{int}}(\K)}\,d\mathcal G(\eta). $$
The choice of $c(s)=s$ yields, in fact, the  Dirichlet form
 which is associated with a diffusion process on $\K(X)$ of the type $\eta(t)=\sum_{i=1}^\infty s_i\delta_{x_i(t)}$, where $(x_i(t))_{i=1}^\infty$ are independent Brownian motions on $X$, see \cite{HKL}.
When we choose $c(s)=s^2$, the generator of the intrinsic Dirichlet form becomes (see \eqref{vfyufdy})
$$(L^{\mathrm{int}}F)(\eta)=\int_X d\eta(x)\,(\Delta^X_x F)(\eta).$$
\end{remark}

Below we denote by $\FC$ the set of the functions on $\K(X)$ which are restrictions of functions from $\FCM$ to $\K(X)$, i.e., they have the form \eqref{ytf8rtf} with $\eta\in\K(X)$.
We  note that $\FC$ is a dense subset of $L^2(\K(X),\mu)$ for any probability measure $\mu$ on $\K(X)$ (see \cite[Corollary~6.2.8]{Hagedorn} for a proof of this rather obvious statement). In particular, $\FC$ is dense in $L^2(\K(X),\mathcal G)$.
We finish this section with the following proposition.

\begin{proposition}\label{hyufgtf} Assume that the function $c$ satisfies
\begin{equation}\label{uigti8tg}
\int_{\Rp} c(s)e^{-s}\,ds<\infty.
\end{equation}
For $\sharp=\mathrm{int},\mathrm{ext},\M$, we have
\begin{equation}\label{gfufuur6}
 \FC\subset D(\mathcal E^\sharp), \end{equation}
and for any $F,G\in\FC$, $\mathcal E^\sharp (F,G)$ is given by the respective formula in \eqref{uftuu7r}--\eqref{ftfttfftu}.

\end{proposition}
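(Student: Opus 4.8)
The plan is to approximate each $F\in\FC$ by a sequence $F_n\in\FCK$ that converges to $F$ both in $L^2(\K(X),\mathcal G)$ and in the form norm $\mathcal E^\sharp_1(\cdot):=\big(\mathcal E^\sharp(\cdot,\cdot)+\|\cdot\|^2_{L^2(\K(X),\mathcal G)}\big)^{1/2}$. Since $(\mathcal E^\sharp,D(\mathcal E^\sharp))$ is by construction the $\mathcal E^\sharp_1$-closure of $(\mathcal E^\sharp,\FCK)$, a sequence $F_n\in\FCK$ which is $\mathcal E^\sharp_1$-Cauchy and whose $L^2$-limit is $F$ will force $F\in D(\mathcal E^\sharp)$ together with $\mathcal E^\sharp(F,G)=\lim_n\mathcal E^\sharp(F_n,G_n)$; evaluating this limit is what will produce the formulas \eqref{uftuu7r}--\eqref{ftfttfftu}.

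First I would record that the right-hand sides of \eqref{uftuu7r}--\eqref{ftfttfftu} are finite for $F,G\in\FC$. By \eqref{vtydr7} and \eqref{vtxsgsydr7} the gradients of $F$ are $(\nabla^{\mathrm{int}}F)(\eta,x)=\sum_i(\partial_ig)(\dots)\nabla f_i(x)$ and $(\nabla^{\mathrm{ext}}F)(\eta,x)=\sum_i(\partial_ig)(\dots)f_i(x)$, both bounded in modulus by $C\chi_A(x)$ for some $A\in\mathcal B_c(X)$ containing the supports of all $f_i$. Rewriting $\int_Xd\eta(x)(\cdots)$ via Corollary~\ref{vjhhfuf} replaces $\eta$ by $\eta+s\delta_x$ and $\int_Xd\eta(x)$ by $\int_{\hat X}dx\,ds\,e^{-s}$, so the intrinsic integrand is dominated by $C^2c(s)e^{-s}\chi_A(x)$ and the extrinsic one by $C^2e^{-s}\chi_A(x)$; their total integrals are at most $C^2\big(\int_Adx\big)\int_{\Rp}c(s)e^{-s}\,ds$ and $C^2\int_Adx$, the former finite precisely by assumption \eqref{uigti8tg}. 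Thus the displayed formulas are meaningful.

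For the approximation I would fix cutoffs $\chi_n\in C_0^\infty(\Rp)$, $0\le\chi_n\le1$, with $\chi_n\equiv1$ on $[1/n,n]$ and $\operatorname{supp}\chi_n\subset[1/(2n),2n]$, chosen of the scaling form $\chi_n(s)=\sigma(ns)\rho(s/n)$ so that $|s\chi_n'(s)|\le C$ uniformly in $n$. Setting $\varphi_{i,n}(x,s):=sf_i(x)\chi_n(s)\in\mathcal D(\hat X)$ and $F_n(\eta):=g(\la\!\la\varphi_{1,n},\eta\ra\!\ra,\dots,\la\!\la\varphi_{N,n},\eta\ra\!\ra)\in\FCK$, the bound $\sum_{x\in\tau(\eta)}s(x)|f_i(x)|\le\|f_i\|_\infty\,\eta(\operatorname{supp}f_i)<\infty$ gives, by dominated convergence over $\tau(\eta)$, that $\la\!\la\varphi_{i,n},\eta\ra\!\ra\to\la f_i,\eta\ra$ for every $\eta\in\K(X)$; hence $F_n\to F$ pointwise and, being bounded by $\|g\|_\infty$, in $L^2(\K(X),\mathcal G)$ as well. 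From \eqref{aaaa}, \eqref{bbb}, using $\nabla_y|_{y=x}\varphi_{i,n}(y,s)=s\chi_n(s)\nabla f_i(x)$ and $\partial_u|_{u=s}\varphi_{i,n}(x,u)=f_i(x)\big(\chi_n(s)+s\chi_n'(s)\big)$, the factor $1/s$ in \eqref{aaaa} cancels the mass, and one checks that $(\nabla^{\mathrm{int}}F_n)(\eta+s\delta_x,x)$ and $(\nabla^{\mathrm{ext}}F_n)(\eta+s\delta_x,x)$ converge pointwise to the corresponding gradients of $F$, while staying dominated, uniformly in $n$, by $C'\chi_A(x)$. Dominated convergence in the Corollary~\ref{vjhhfuf}-representation of the forms (dominating functions $(C')^2c(s)e^{-s}\chi_A(x)$ and $(C')^2e^{-s}\chi_A(x)$, integrable by \eqref{uigti8tg}) then shows that the gradients of $F_n$ are Cauchy in the relevant weighted $L^2$-spaces, i.e.\ $\mathcal E^\sharp(F_n-F_m,F_n-F_m)\to0$. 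Consequently $\{F_n\}$ is $\mathcal E^\sharp_1$-Cauchy, its $D(\mathcal E^\sharp)$-limit coincides with its $L^2$-limit $F$, and passing to the limit (and polarizing) yields $F\in D(\mathcal E^\sharp)$ together with \eqref{uftuu7r}--\eqref{ftfttfftu}. The case $\sharp=\M$ will follow by adding the intrinsic and extrinsic identities.

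The main obstacle will be the extrinsic gradient: differentiating $\varphi_{i,n}$ in the mass variable produces the spurious term $s\chi_n'(s)$, which is absent from the gradient of $F$ and is not uniformly small on the transition regions $s\approx1/n$ and $s\approx n$. The scaling choice of $\chi_n$ is what saves the day, keeping $s\chi_n'(s)$ uniformly bounded while making it vanish pointwise (for fixed $s$ it is eventually $0$), so that the weight $e^{-s}$ together with the shrinking Lebesgue measure of the transition regions annihilates its contribution in the limit. The intrinsic case is cleaner, since the spatial derivative does not see $\chi_n$ and the factor $1/s$ exactly cancels the mass; there the only genuine input is the integrability condition \eqref{uigti8tg}, which is in fact the sole place where the hypothesis is needed, the extrinsic estimates holding without it.
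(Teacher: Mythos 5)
Your proposal is correct and follows essentially the same route as the paper: approximate $F\in\FC$ by $F_n\in\FCK$ obtained by multiplying the mass variable $s$ by smooth cutoffs supported in $[1/(2n),\cdot\,]$ and vanishing for large $s$, verify $L^2$-convergence and $\mathcal E^\sharp$-Cauchyness via the representation of Lemma~\ref{gufu7} and dominated convergence (with \eqref{uigti8tg} supplying integrability of the intrinsic dominating function $c(s)e^{-s}$), and conclude by closedness of the form plus polarization. The only difference is cosmetic: your scaling choice $\chi_n(s)=\sigma(ns)\rho(s/n)$ makes $s\chi_n'(s)$ uniformly bounded, whereas the paper's cutoff $h_n(s)=su_n(s)v_n(s)$ lets the derivative term grow linearly near $s\approx n$ and relies on the weight $e^{-s}$ to kill it.
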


\begin{proof} For $F\in D(\mathcal E^\sharp)$, denote $\mathcal E^\sharp(F):=\mathcal E^\sharp(F,F)$.
On $D(\mathcal E^\sharp)$ we consider the norm
\begin{equation}\label{utyyt}\|F\|_{D(\mathcal E^\sharp)}:=\mathcal E^\sharp(F)^{1/2}+\|F\|_{L^2(\K(X),\mathcal G)}\,.\end{equation}
Let $F\in \FC$, and for simplicity of notation, assume that $F$ is of the form
$F(\eta)=g(\la f,\eta\ra)$, where $g\in C_b^\infty(\R)$ and $f\in\mathcal D(X)$.
For each $n\in\N$, we fix any function $u_n\in C^\infty(\R)$ such that
\begin{equation}\label{ugitgeesa86r} \chi_{[1/n,\,\infty)}\le u_n\le \chi_{[1/(2n),\,\infty)}\end{equation}
and
\begin{equation}\label{gyuyyt}
|u_n'(t)|\le 4n\,\chi_{[1/(2n),\,1/n]}(t),\quad t\in\R.\end{equation}
For $n\in\N$,  let $v_n\in C^\infty(\R)$ be such that
\begin{equation}\label{ucsgsgi}
\chi_{(-\infty,n+1]}\le v_n\le \chi_{(-\infty,n+2]}\end{equation}
 and
\begin{equation}\label{ffuyf}
|v_n'(t)|\le 2\,\chi_{[n+1,\,n+2]}(t),\quad t\in\R.\end{equation}
We define
\begin{equation}\label{gygugyu}
 h_n(s):=s u_n(s)v_n(s),\quad s\in\Rp,\ n\in\N,\end{equation}
and
\begin{equation}\label{rtdrddr}
 \varphi_n(x,s):=f(x)h_n(s),\quad (x,s)\in\hat X,\ n\in\N.\end{equation}
Note that $h_n\in C_0^\infty(\Rp)$ and $\varphi_n\in\mathcal D(\hat X)$.
Let
\begin{equation}\label{e6}
 F_n(\eta):=g(\la\!\la \varphi_n,\eta\ra\!\ra),\quad \eta\in\K(X),\ n\in\N,\end{equation}
each $F_n$ being an element of $\FCK$.
For each $\eta\in\K(X)$,
\begin{equation}\label{dredroiu} \la\!\la \varphi_n,\eta\ra\!\ra=\sum_{x\in\tau(\eta)} f(x)s(x)u_n(s(x))v_n(s(x))\to\la f,\eta\ra\quad \text{as }n\to\infty.\end{equation}
Hence, by the dominated convergence theorem, $F_n\to F$ in $L^2(\K(X),\mathcal G)$.
Note that
\begin{equation}\label{sews5t} F_n(\eta+s\delta_x)=g(\la\!\la \varphi_n,\eta\ra\!\ra+\varphi_n(x,s)),\quad \eta\in\K(X),\ (x,s)\in\hat X.\end{equation}
Using  Lemma~\ref{gufu7} and formulas \eqref{ugitgeesa86r}--\eqref{sews5t}, one can easily show that
\begin{equation}\label{rtew4w}\mathcal E^\sharp (F_n-F_m)\to0\quad\text{as }n,m\to\infty. \end{equation}
Since $(\mathcal E^\sharp,D(\mathcal E^\sharp))$ is a closed bilinear form on $L^2(\K(X),\mathcal G)$, we therefore have $F\in D(\mathcal E^\sharp)$, and furthermore $\mathcal E^\sharp(F_n)\to \mathcal E^\sharp(F)$ as $n\to\infty$. From here, analogously to the proof of \eqref{rtew4w}, we conclude that $\mathcal E^\sharp(F)$ is given by the respective formula in \eqref{uftuu7r}--\eqref{ftfttfftu} with $G=F$.

The statement of the proposition about $\mathcal E^\sharp(F,G)$ for general $F,G\in \FC$ follows from the above statement about $\mathcal E^\sharp(F)$ and the polarization identity.
\end{proof}

\begin{remark}\label{yu7tir6tfr}
Let  $\sharp=\mathrm{int},\mathrm{ext},\M$. For $\sharp=\mathrm{int},\M$, assume that condition \eqref{uigti8tg} is satisfied and the dimension $d$ of the underlying space $X$ is $\ge2$. In the forthcoming paper \cite{HKL}, for $\sharp=\mathrm{int},\mathrm{ext},\M$, we will prove the existence of a conservative diffusion process on $\K(X)$ (i.e., a conservative strong Markov process with continuous sample paths in $\K(X)$) which is properly associated with the Dirichlet form $(\mathcal E^\sharp,D(\mathcal E^\sharp))$, see \cite{MR} for details on diffusion processes properly associated with a Dirichlet form. In particular, this diffusion process is $\mathcal G$-symmetric and has $\mathcal G$ as an invariant measure.
\end{remark}

\begin{remark}\label{jgyuuygfy} Let  $\sharp=\mathrm{int},\mathrm{ext},\M$. Consider the Dirichlet form
$(\mathcal E^\sharp,D_1(\mathcal E^\sharp))$ which is defined as the closure of the bilinear form $(\mathcal E^\sharp,\FC)$.
By Proposition~\ref{hyufgtf}, the Dirichlet form $(\mathcal E^\sharp,D(\mathcal E^\sharp))$ is an extension of the Dirichlet form $(\mathcal E^\sharp,D_1(\mathcal E^\sharp))$, i.e., $D_1(\mathcal E^\sharp)\subset D(\mathcal E^\sharp)$.
So, there is a natural question
 whether these  Dirichlet forms coincide, i.e.,  $D_1(\mathcal E^\sharp)= D(\mathcal E^\sharp)$, or, equivalently,
 whether the
set $\FC$ is dense in in the space $D(\mathcal E^\sharp)$ equipped with norm \eqref{utyyt}.
We do not expect a positive answer to this question.
Furthermore, we do not expect the existence of a conservative diffusion process on $\K(X)$ which is properly associated with the Dirichlet form  $(\mathcal E^\sharp,D_1(\mathcal E^\sharp))$.
\end{remark}

\section{Essential self-adjointness of the generators}\label{tyde67e}

In this section, for $\sharp=\mathrm{int},\mathrm{ext},\M$, we will discuss the essential self-adjointness of the operator $(L^\sharp,D(L^\sharp))$ on the domain $\FCK$.

\begin{theorem}\label{ydred6e6r} Let  $\sharp=\mathrm{int},\mathrm{ext},\M$.
Let the function $c:\Rp\to[0,\infty)$ be measurable  and locally bounded.
For  $\sharp=\M$, assume additionally that
\begin{equation}\label{gyurt68otgyft}
c(s)=a_1 s+ a_2 s^2+a_3s^3
\end{equation}
for some $a_i\ge 0$, $i=1,2,3$, $\max\{a_1,a_2,a_3\}>0$.
Then the operator $(L^\sharp,\linebreak \FCK)$ is essentially self-adjoint on $L^2(\K(X),\mathcal G)$.
\end{theorem}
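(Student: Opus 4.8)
The plan is to diagonalize the whole problem through the Poisson realization of the gamma measure established in the proof of Proposition~\ref{hufuf}. Recall that $\mathcal G$ is the pushforward of the Poisson measure $\pi$ on $\Gamma(\hat X)$ (with intensity $d\varkappa=dx\,d\lambda$) under $\mathcal R$, so that $L^2(\K(X),\mathcal G)$ is unitarily isomorphic, via $\mathcal R$ together with the Wiener--It\^o--Segal (chaos) isomorphism, to the symmetric Fock space $\mathcal F(\mathcal H)$ over the one-particle space $\mathcal H:=L^2(\hat X,\varkappa)$. Under this isomorphism $U$, a function $F\in\FCK$ is sent to a finite-particle vector built from the test functions $\varphi_i\in\mathcal D(\hat X)$, and constants correspond to the vacuum. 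The first step is to verify that $U$ intertwines $-L^\sharp$ with the second quantization $d\Gamma(H^\sharp)$ of an explicit nonnegative one-particle operator $H^\sharp$ on $\mathcal H$. Evaluating the generator of Proposition~\ref{hiftr8iltrl} on the additive functionals $\langle\!\langle\varphi,\eta\rangle\!\rangle$ (which correspond to first chaos), and using $\int_X d\eta(x)=\sum_x s(x)(\cdots)$ together with Corollary~\ref{vjhhfuf}, one reads off
\[ H^{\mathrm{int}}\varphi=-\tfrac{c(s)}{s}\,\Delta_x\varphi,\qquad H^{\mathrm{ext}}\varphi=-s\,\partial_s^2\varphi+s\,\partial_s\varphi,\qquad H^{\M}=H^{\mathrm{int}}+H^{\mathrm{ext}}, \]
for $\varphi\in\mathcal D(\hat X)$; positivity is confirmed by the form identities of Lemma~\ref{gufu7} restricted to first chaos, which yield $\langle H^{\mathrm{int}}\varphi,\varphi\rangle_{\mathcal H}=\int_{\hat X}\tfrac{c(s)}{s}|\nabla_x\varphi|^2\,d\varkappa$ and $\langle H^{\mathrm{ext}}\varphi,\varphi\rangle_{\mathcal H}=\int_{\hat X}|\partial_s\varphi|^2 e^{-s}\,dx\,ds$.

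Once the intertwining is in place, essential self-adjointness of $(L^\sharp,\FCK)$ reduces, by the standard second-quantization principle, to essential self-adjointness of $H^\sharp$ on $\mathcal D(\hat X)$: if $H^\sharp$ is essentially self-adjoint there, then $d\Gamma(H^\sharp)$ is essentially self-adjoint on the finite-particle space generated by $\mathcal D(\hat X)$, and a separate (routine but necessary) lemma identifies $U\FCK$ as a core for this operator. I would dispose of $\sharp=\mathrm{int}$ and $\sharp=\mathrm{ext}$ first, since these require no hypothesis on $c$. For $\sharp=\mathrm{int}$ the operator $H^{\mathrm{int}}=\tfrac{c(s)}{s}(-\Delta_x)$ is a direct integral over $s\in\Rp$ of the fibre operators $\tfrac{c(s)}{s}(-\Delta_x)$; each fibre is a nonnegative multiple of $-\Delta_x$, essentially self-adjoint on $C_0^\infty(X)$ irrespective of the (finite) value of the multiplier, so the direct integral is essentially self-adjoint on $\mathcal D(\hat X)$. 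For $\sharp=\mathrm{ext}$ the operator acts only in $s$: it is the Sturm--Liouville operator $-\tfrac1w(pu')'$ with $p(s)=e^{-s}$, weight $w(s)=e^{-s}/s$, and $q=0$, and a Weyl analysis of the solutions $1$ and $e^{s}$ shows the limit-point case at both endpoints (the constant solution fails to be square integrable near $0$ because $w\sim 1/s$, while $e^{s}$ fails near $\infty$). Hence $H^{\mathrm{ext}}$ is essentially self-adjoint on $C_0^\infty(\Rp)$, and tensoring with the identity in $x$ finishes this case.

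The genuine difficulty is the full operator $H^{\M}=\tfrac{c(s)}{s}(-\Delta_x)-s\partial_s^2+s\partial_s$, where the $x$- and $s$-directions are coupled through the $s$-dependent coefficient $\tfrac{c(s)}{s}$. Here I would Fourier-transform in $x$, turning $-\Delta_x$ into multiplication by $|\xi|^2$ and decomposing $H^{\M}$ as a direct integral $\int^{\oplus}\mathcal H_\xi\,d\xi$ of one-dimensional operators $\mathcal H_\xi=\tfrac{c(s)}{s}|\xi|^2-s\partial_s^2+s\partial_s$ on $L^2(\Rp,\lambda)$, i.e.\ the extrinsic operator perturbed by the potential $V_\xi(s)=\tfrac{c(s)}{s}|\xi|^2$. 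This is precisely where hypothesis~\eqref{gyurt68otgyft} enters: with $c(s)=a_1s+a_2s^2+a_3s^3$ one has $\tfrac{c(s)}{s}=a_1+a_2s+a_3s^2$, which stays bounded as $s\to0$ (so the potential has no singularity at the left endpoint and the limit-point analysis there is unaffected) and grows only polynomially as $s\to\infty$ (so the limit-point case persists at infinity). Each $\mathcal H_\xi$ is then essentially self-adjoint on $C_0^\infty(\Rp)$.

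The hard part, and the step I expect to be the main obstacle, is to upgrade this fibrewise statement to essential self-adjointness of the direct integral on $\mathcal D(\hat X)$, that is, to produce a common core with uniform-in-$\xi$ control. I expect this to be the true reason a polynomial $c$ (rather than a general locally bounded one) is imposed: the degree restriction is what keeps the family $(\mathcal H_\xi)_\xi$ comparable to a single fixed operator, permitting either a direct-integral core argument or an application of Nelson's commutator theorem with a comparison operator $N$ assembled from $H^{\mathrm{ext}}$, powers of $s$, and $-\Delta_x$, the commutator bound $\pm i[H^{\M},N]\le CN$ being exactly what degenerates once $\tfrac{c(s)}{s}$ grows faster than $s^2$. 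It is also the precise point at which the case $c\equiv1$ collapses, since then $\tfrac{c(s)}{s}=1/s$ produces the singular potential $|\xi|^2/s$ at $s=0$, obstructing the left-endpoint analysis --- exactly the open problem flagged in the introduction.
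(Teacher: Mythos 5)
Your reduction of the theorem to essential self-adjointness of a one-particle operator on $L^2(\hat X,\varkappa)$ is exactly the paper's route: the chaos isomorphism $I$ carries $(L^\sharp,\mathcal P)$ into the differential second quantization $d\operatorname{Exp}(\mathfrak L^\sharp)$ of the one-particle operators $\mathfrak L^{\mathrm{int}}=\frac{c(s)}{s}\Delta_x$ and $\mathfrak L^{\mathrm{ext}}=s(\partial_s^2-\partial_s)$, and the standard second-quantization principle does the rest. Your treatments of $\sharp=\mathrm{int}$ (a multiplication operator after Fourier transform in $x$) and $\sharp=\mathrm{ext}$ (a Sturm--Liouville operator in the limit-point case at both endpoints of $\Rp$) are sound; the paper does not spell these cases out either, concentrating on $\sharp=\M$.

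For $\sharp=\M$, however, your argument has a genuine gap, and you have named it yourself: after Fourier transforming in $x$ you obtain a direct integral $\int^{\oplus}\mathcal H_\xi\,d\xi$ of fibre operators, and essential self-adjointness of each $\mathcal H_\xi$ on $C_0^\infty(\Rp)$ does \emph{not} imply that $\mathcal D(\hat X)$ (or its Fourier image) is a core for the direct integral; you only speculate that Nelson's commutator theorem ``should'' close this, without producing the comparison operator or the commutator bound. This is precisely the step the paper actually proves, and it does so by a different mechanism: it shows that every function $f(x)s^k$ with $f\in\mathcal S(X)$, $k\in\N$, lies in the domain of the closure of $(\mathfrak L^\M,\mathcal D(\hat X))$, that the linear span $\Upsilon$ of such functions is dense in $L^2(\hat X,\varkappa)$ and invariant under the Fourier-transformed operator $\mathfrak R^\M=-\frac{c(s)}{s}\,\|x\|_X^2+s(\partial_s^2-\partial_s)$ --- this is where the hypothesis that $c(s)/s=a_1+a_2s+a_3s^2$ is a polynomial enters, since each application of $\mathfrak R^\M$ then shifts the power of $s$ by at most $2$ --- and that the moment bound $m_l=\int_0^\infty s^{l-1}e^{-s}\,ds\le C^l\,l!$ yields $\|(\mathfrak R^\M)^n\varphi\|\le C^n(k+2n)^{k+2n}$, so that $\sum_n\|(\mathfrak R^\M)^n\varphi\|^{-1/2n}=\infty$ and Nussbaum's theorem on quasi-analytic vectors applies to the non-positive operator $(\mathfrak R^\M,\Upsilon_\C)$. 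Your intuition that the threshold $\deg c\le 3$ is critical is correct --- if $c(s)/s$ had degree $3$ the bound would become $(k+3n)^{k+3n}$ and the Nussbaum series would converge --- but the proof that realizes this threshold is the quasi-analytic-vector estimate, not a fibrewise or commutator argument. As it stands, your proof of the case $\sharp=\M$ is not complete.
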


\begin{proof}Fix any $F\in\FCC$ and $\gamma\in\Gamma(\hat X)$. Consider the function
$$\hat X\setminus\gamma\ni(x,s)\mapsto F(\gamma+\delta_{(x,s)}). $$
It is evident that this function admits a unique extension by continuity to the whole space $\hat X$. We denote the resulting function by $F(\gamma+\delta_{(x,s)})$, although $\gamma+\delta_{(x,s)}$ is not necessarily an element of $\Gamma(\hat X)$. Note that $F(\gamma+\delta_{(x,s)})$ is a smooth functions of $(x,s)\in\hat X$.

We preserve the notation $(\mathcal E^\sharp,D(\mathcal E^\sharp))$ for the realization of the respective Dirichlet form on $\Gamma_{pf}(\hat X)$. Thus, $(\mathcal E^\sharp,D(\mathcal E^\sharp))$ is the closure of the bilinear form
$$(\mathcal E^\sharp,\FCC)$$ on $L^2(\Gamma(\hat X),\pi).$
Furthermore, by the counterpart of Lemma~\ref{gufu7}
for the domain\linebreak $\FCK$, we get, for any $F,G\in\FCC$,
\begin{align}
&\mathcal E^{\mathrm{int}}(F,G)\notag\\
&\quad=\int_{\Gamma_{pf}(\hat X)}d\pi(\gamma)\int_{\hat X}dx\,ds\,e^{-s}\,\frac{c(s)}{s^2}
\big\la\nabla_x
F(\gamma+\delta_{(x,s)}),\nabla_x
G(\gamma+\delta_{(x,s)})\big\ra_X\,,\notag\\
&\mathcal E^{\mathrm{ext}}(F,G)\notag\\
&\quad =\int_{\Gamma_{pf}(\hat X)}d\pi(\gamma)\int_{\hat X}dx\,ds\,e^{-s}\left(\frac{d}{ds}
\,F(\gamma+\delta_{(x,s)})\right)\left(\frac{d}{ds}
G(\gamma+\delta_{(x,s)})\right),\notag\\
&\mathcal E^{\K}(F,G)=\mathcal E^{\mathrm{int}}(F,G)+\mathcal E^{\mathrm{ext}}(F,G).\label{ufr8}
\end{align}

We keep the notation $(L^\sharp,D(L^\sharp))$ for the generator of the closed bilinear form $(\mathcal E^\sharp,D(\mathcal E^\sharp))$ on $L^2(\Gamma_{pf},\pi)$.
 We easily conclude from Proposition~\ref{hiftr8iltrl} that
 $$\FCC\subset D(L^\sharp)$$ and for each $F\in\FCC$ and $\gamma\in\Gamma(\hat X)$
\begin{align}
&(L^{\mathrm{int}}F)(\gamma)=\int_{\hat X}d\gamma(x,s)\,\frac{c(s)}{s}\,(\Delta^X_x F)(\gamma),\label{2}\\
&(L^{\mathrm{ext}}F)(\gamma)=\int_{\hat X}d\gamma(x,s)\,s\,(\Delta^\Rp_x F)(\gamma),\label{3}\\
&(L^{\K}F)(\gamma)=(L^{\mathrm{int}}F)(\gamma)+(L^{\mathrm{ext}}F)(\gamma),\label{4}
\end{align}
with
\begin{align*}(\Delta^X_xF)(\gamma):=&\Delta_y\big|_{y=x}F(\gamma-\delta_{(x,s)}+\delta_{(y,s)}),\\
 (\Delta_{x}^\Rp F)(\gamma):=&\bigg(\frac{d^2}{du^2}-\frac{d}{du}\bigg)\Big|_{u=s}F(\gamma-\delta_{(x,s)}+\delta_{(x,u)}).
\end{align*}

We equivalently have to prove that the symmetric operator
$(L^\sharp,\linebreak \FCC)$  is essentially self-adjoint on $L^2(\Gamma(\hat X),\pi)$.
 Denote by\linebreak  $(H^\sharp, D(H^\sharp))$ the closure of this symmetric operator  on $L^2(\Gamma(\hat X),\pi)$.
 So we have to prove that the operator $(H^\sharp, D(H^\sharp))$ is self-adjoint.

 It is not hard to check by approximation that, for each $\varphi\in\mathcal D(\hat X)$ and $n\in\N$, $F=\la \varphi,\cdot\ra^n \in D(H^\sharp)$ and  $(H^\sharp F)(\gamma)$ is given by the right hand sides of formulas \eqref{2}--\eqref{4}, respectively. Hence, by the polarization identity (e.g.\ \cite[Chap.~2, formula (2.17)]{BK}), we have
 \begin{equation}\label{cdrte6}
 \la \varphi_1,\cdot\ra\dotsm\la \varphi_n,\cdot\ra\in D(H^\sharp),\quad \varphi_1,\dots,\varphi_n\in\mathcal D(\hat X),\ n\in\N,\end{equation}
 and again the action of $H^\sharp$ onto a  function $F$ as in \eqref{cdrte6} is given by  the right hand side of formulas \eqref{2}--\eqref{4}, respectively. Let $\mathcal P$ denote the set of all functions on $\Gamma(\hat X)$ which are finite sums of functions as in \eqref{cdrte6} and constants. Thus, $\mathcal P$ is a set of polynomials on $\Gamma(\hat X)$, and $\mathcal P\subset D(H^\sharp)$.
 Furthermore,
 \begin{equation}\label{rdrde6}
 (-H^\sharp F,G)_{L^2(\Gamma(\hat X),\pi)}=\mathcal E^\sharp(F,G),\quad F,G\in\mathcal P,\ \sharp=\mathrm{int}, \mathrm{ext},\M.
 \end{equation}
 In formula \eqref{rdrde6}, $\mathcal E^\sharp(F,G)$ is given by formulas \eqref{ufr8}.

For a real separable  Hilbert space $\mathcal H$, we denote
by $\mathcal F(\mathcal H)$ the symmetric Fock space over $\mathcal H$. Thus, $\mathcal F(\mathcal H)$ is the real Hilbert space
$$\mathcal F(\mathcal H)=\bigoplus_{n=0}^\infty \mathcal F^{(n)}(\mathcal H),$$
where $\mathcal F^{(0)}(\mathcal H):=\R$, and for $n\in\N$, $\mathcal F^{(n)}(\mathcal H)$ coincides with $\mathcal H^{\odot n}$ as a set, and for any $f^{(n)},g^{(n)}\in\mathcal F^{(n)}(\mathcal H)$
$$ (f^{(n)},g^{(n)})_{\mathcal F^{(n)}(\mathcal H)}:=(f^{(n)},g^{(n)})_{\mathcal H^{\odot n}}\,n!\,.$$
Here $\odot$ stands for symmetric tensor product.

Recall the measure $\varkappa$ on $\hat X$ defined by formulas \eqref{5}, \eqref{biulgtfi}. Let
\begin{equation}\label{tfdty}
I: L^2(\Gamma(\hat X),\pi)\to\mathcal F(L^2(\hat X,\varkappa))\end{equation}
denote the unitary isomorphism which is derived through multiple stochastic integrals with respect to the centered Poisson random measure on $\hat X$ with intensity measure $\varkappa$, see e.g.\ \cite{Surgailis}. Denote by $\tilde{\mathcal P}$ the subset of $\mathcal F(L^2(\hat X,\varkappa))$ which is the linear span of vectors of the form
$$ \varphi_1\odot \varphi_2\odot\dots\odot \varphi_n,\quad \varphi_1,\dots,\varphi_n\in\mathcal D(\hat X),\ n\in\N$$
and the vacuum vector $\Psi=(1,0,0,\dots)$. For any $\varphi\in\mathcal D(\hat X)$, denote by $M_{\varphi}$ the operator of multiplication  by the function $\la\varphi,\cdot\ra$ in $L^2(\Gamma(\hat X),\pi)$. Using the representation of the operator $IM_{\varphi}I^{-1}$ as a sum of creation, neutral, and annihilation operators in the Fock space (see e.g.\ \cite{Surgailis}), we easily conclude that $I\mathcal P=\tilde{\mathcal P}$.

We define a bilinear form $(\tilde {\mathcal E}{}^\sharp,\tilde{\mathcal P})$ by
$$ \tilde {\mathcal E}{}^\sharp (f,g):=\mathcal E^\sharp(I^{-1}f,I^{-1}g),\quad f,g\in \tilde{\mathcal P} $$
on $\mathcal F(L^2(\hat X,\varkappa))$.

For each $(x,s)\in\hat X$, we define an annihilation operator at $(x,s)$ as follows:
$$\partial_{(x,s)}:\tilde{\mathcal P}\to \tilde{\mathcal P}$$ is the linear map given by
\begin{equation}\label{serasrwe}\partial_{(x,s)}\Psi:=0,\quad \partial_{(x,s)}\varphi_1\odot\varphi_2\odot\dots\odot\varphi_n:=
\sum_{i=1}^n \varphi_i(x,s)\varphi_1\odot\varphi_2\odot\dots\odot\check\varphi_i\odot\dots\odot\varphi_n,\end{equation}
where $\check\varphi_i$ denotes the absence of $\varphi_i$. We will preserve the notation $\partial_{(x,s)}$ for the operator $I\partial_{(x,s)}I^{-1}:\mathcal P\to\mathcal P$. This operator admits the following explicit representation:
$$\partial_{(x,s)}F(\gamma)=F(\gamma+\delta_{(x,s)})-F(\gamma)$$
for $\pi$-a.a.\ $\gamma\in\Gamma(\hat X)$, see e.g.\
\cite{IK,NV}.
Note that
\begin{align*}\nabla_x F(\gamma+\delta_{(x,s)})&=\nabla_x\big( F(\gamma+\delta_{(x,s)})-F(\gamma)\big),\\
\frac{d}{ds}\, F(\gamma+\delta_{(x,s)})&=\frac{d}{ds}\big( F(\gamma+\delta_{(x,s)})-F(\gamma)\big).
\end{align*}
Hence, by \eqref{ufr8}, for any $F,G\in\mathcal P$,
\begin{align}
&\mathcal E^{\mathrm{int}}(F,G)=\int_{\Gamma(\hat X)}d\pi(\gamma)\int_{\hat X}dx\,ds\,e^{-s}\,\frac{c(s)}{s^2}
\big\la\nabla_x\,\partial_{(x,s)}F(\gamma)
,\nabla_x\,\partial_{(x,s)}
G(\gamma)\big\ra_X\,,\notag\\
&\mathcal E^{\mathrm{ext}}(F,G)=\int_{\Gamma(\hat X)}d\pi(\gamma)\int_{\hat X}dx\,ds\,e^{-s}\left(\frac{\partial}{\partial s}
\,\partial_{(x,s)}F(\gamma)\right)\left(\frac{\partial}{\partial s}\partial_{(x,s)}
G(\gamma)\right),\notag\\
&\mathcal E^{\M}(F,G)=\mathcal E^{\mathrm{int}}(F,G)+\mathcal E^{\mathrm{ext}}(F,G).\notag
\end{align}
Hence, for any $f,g\in\tilde {\mathcal P}$,
\begin{align}
&\tilde {\mathcal E}{}^{\mathrm{int}}(f,g)=
\int_{\hat X}d\varkappa(x,s)\,
\frac{c(s)}{s}
\sum_{i=1}^d
\left(
\frac{\partial}{\partial x^i}\,\partial_{(x,s)}
f, \frac{\partial}{\partial x^i}\,\partial_{(x,s)}
g
\right)_{\mathcal F(L^2(\hat X,\varkappa))},\notag\\
&\tilde{\mathcal E}{}^{\mathrm{ext}}(f,g)=
\int_{\hat X}d\varkappa(x,s)\,s
\left(
\frac{\partial}{\partial s}\,\partial_{(x,s)}
f, \frac{\partial}{\partial s}\,\partial_{(x,s)}
g
\right)_{\mathcal F(L^2(\hat X,\varkappa))},\notag\\
&\tilde{\mathcal E}{}^{\M}(f,g)=\tilde{\mathcal E}{}^{\mathrm{int}}(f,g)+\tilde{\mathcal E}{}^{\mathrm{ext}}(f,g).\label{fut7re75i}
\end{align}

Consider the bilinear forms
\begin{align}
&\mathfrak E^{\mathrm{int}}(\varphi,\psi):=\int_{\hat X}d\varkappa(x,s)\,\frac{c(s)}{s} \big\la \nabla_x \varphi(x,s),\nabla_x\psi(x,s)\big\ra_X,\notag\\
&\mathfrak E^{\mathrm{ext}}(\varphi,\psi):=\int_{\hat X}d\varkappa(x,s)\,s \left( \frac{\partial}{\partial s}\varphi(x,s)\right)\left(\frac{\partial}{\partial s}\psi(x,s)\right),\notag\\
&\mathfrak E^{\M}(\varphi,\psi):=\mathfrak E^{\mathrm{int}}(\varphi,\psi)+\mathfrak E^{\mathrm{ext}}(\varphi,\psi),\quad \varphi,\psi\in\mathcal D(\hat X),
\label{yu7e5i}
\end{align}
on $L^2(\hat X,\varkappa)$. We easily calculate  the $L^2$-generators of these bilinear forms:
\begin{equation}\label{fgdtrse6t}
\mathfrak E^\sharp(\varphi,\psi)=(-\mathfrak L^\sharp\varphi,\psi)_{L^2(\hat X,\varkappa)},\quad \varphi,\psi \in \mathcal D(\hat X),\end{equation}
where for $\varphi\in\mathcal D(\hat X)$
\begin{align}
&(\mathfrak L^{\mathrm{int}}\varphi)(x,s)=\frac{c(s)}{s}\,\Delta_x\varphi(x,s),\notag\\
&(\mathfrak L^{\mathrm{ext}}\varphi)(x,s)=s\left(
\frac{\partial^2}{\partial s^2}-\frac{\partial}{\partial s}
\right)\varphi(x,s),\notag\\
&\mathfrak L^{\M}\varphi= \mathfrak L^{\mathrm{int}}\varphi+ \mathfrak L^{\mathrm{ext}}\varphi=\frac{c(s)}{s}\,\Delta_x\varphi(x,s)+s\left(
\frac{\partial^2}{\partial s^2}-\frac{\partial}{\partial s}
\right)\varphi(x,s).
\label{ui8t58rf}
\end{align}

Let us now recall the notion of a differential second
quantization. Let $(\mathcal A,\mathcal D)$ be a densely defined
symmetric operator in a real, separable Hilbert space $\mathcal H$.
We denote by $\mathcal F_{\mathrm{alg}}(\mathcal D)$
the subset of the Fock space $\mathcal F(\mathcal H)$ which is the linear span of the vacuum vector $\Psi$ and vectors of the form
$\varphi_1\odot \varphi_2\odot\dots\odot \varphi_n$, where $\varphi_1,\dots,\varphi_n\in\mathcal D$ and $n\in\N$. The differential second quantization $d\operatorname{Exp}(\mathcal A)$ is defined as the symmetric operator in $\mathcal F(\mathcal H)$ with domain $\mathcal F_{\mathrm{alg}}(\mathcal D)$ which acts as follows:
\begin{gather}
d\operatorname{Exp}(\mathcal A)\Psi:=0,\notag\\
d\operatorname{Exp}(\mathcal A)\varphi_1\odot \varphi_2\odot\dots\odot \varphi_n:=\sum_{i=1}^n \varphi_1\odot \varphi_2\odot\dots\odot
(\mathcal A\varphi_i)\odot\dots\odot \varphi_n.\label{uyrde56e6}
\end{gather}
By e.g.\ \cite[Chap.~6, subsec. 1.1]{BK}, if the operator $(\mathcal A,\mathcal D)$ is essentially self-adjoint on $\mathcal H$, then the differential second quantization $(d\operatorname{Exp}(\mathcal A), \mathcal F_{\mathrm{alg}}(\mathcal D))$ is essentially self-adjoint on $\mathcal F(\mathcal H)$.

Now, we note that $\tilde{\mathcal P}=\mathcal F_{\mathrm{alg}}(\mathcal D(\hat X))$.
By \eqref{serasrwe}--\eqref{uyrde56e6} (see also \cite[Chap.~6, Sect.~1]{BK}), an easy calculation shows that
$$\tilde{\mathcal E}{}^\sharp(f,g)=(d\operatorname{Exp}(-\mathfrak L^\sharp)f,g)_{\mathcal F(L^2(\hat X,\varkappa))}\,,\quad
f,g\in\tilde{\mathcal P},\ \sharp=\mathrm{int}, \mathrm{ext},\M.$$
Hence, by \eqref{rdrde6},
\begin{equation}\label{yr6e6i}
\tilde H^\sharp f=d\operatorname{Exp}(\mathfrak L^\sharp)f,\quad f\in\tilde{\mathcal P},\   \sharp=\mathrm{int}, \mathrm{ext},\M.\end{equation}
Here $\tilde H^\sharp:= IH^\sharp I^{-1}$.
To prove the theorem, it suffices to show that the operator $(H^\sharp,\mathcal P)$ is essentially self-adjoint on $L^2(\K(X),\mathcal G)$, or equivalently the operator $(\tilde H^\sharp,\tilde{\mathcal P})$ is essentially self-adjoint on $\mathcal F(L^2(\hat X,\varkappa))$. By \eqref{yr6e6i}, the theorem will follow from the lemma below.
\end{proof}

\begin{lemma}\label{gftyrebuhj} Under the assumptions of Theorem~\ref{ydred6e6r}, the  operator $(\mathfrak L^\sharp,\mathcal D(\hat X))$ is essentially self-adjoint on $L^2(\hat X,\varkappa)$, $\sharp=\mathrm{int}, \mathrm{ext},\M$.
\end{lemma}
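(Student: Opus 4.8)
The plan is to exploit the product structure $L^2(\hat X,\varkappa)=L^2(X,dx)\otimes L^2(\Rp,\lambda)$, with $d\lambda(s)=s^{-1}e^{-s}\,ds$, and to reduce each of the three operators to a one–dimensional Sturm--Liouville analysis on $\Rp$. The backbone is the operator $A:=s(\partial_s^2-\partial_s)$ on $L^2(\Rp,\lambda)$. Writing $\mathfrak E^{\mathrm{ext}}$ in Sturm--Liouville form one checks that $-A u=\tfrac1w\bigl(-(p u')'\bigr)$ with $p(s)=e^{-s}$, weight $w(s)=s^{-1}e^{-s}$, and zero potential; the equation $Au=0$ has the two solutions $u\equiv1$ and $u=e^{s}$, so the general solution is $u=Ce^s+D$. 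Testing against $w\,ds$ shows that near $s=\infty$ only the constant solution is square integrable ($e^{2s}s^{-1}e^{-s}$ is not), and near $s=0$ only the combination $C(e^s-1)\sim Cs$ is (the constant part forces $\int_0 s^{-1}\,ds=\infty$). Hence $A$ is in Weyl's limit-point case at \emph{both} endpoints and is therefore essentially self-adjoint on $C_0^\infty(\Rp)$.

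For $\sharp=\mathrm{ext}$ we have $\mathfrak L^{\mathrm{ext}}=\mathrm{Id}\otimes A$. Since $C_0^\infty(X)\otimes_{\mathrm{alg}}C_0^\infty(\Rp)$ is a core for $\mathrm{Id}\otimes\bar A$ and is contained in $\mathcal D(\hat X)$, while $\mathfrak L^{\mathrm{ext}}$ is symmetric on $\mathcal D(\hat X)$ and an integration by parts against this core gives $\mathfrak L^{\mathrm{ext}}|_{\mathcal D(\hat X)}\subset \mathrm{Id}\otimes\bar A$, the closure of $\mathfrak L^{\mathrm{ext}}$ on $\mathcal D(\hat X)$ equals the self-adjoint operator $\mathrm{Id}\otimes\bar A$. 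For $\sharp=\mathrm{int}$ I would instead diagonalise in $x$ by the Fourier transform $\mathcal F_x$, under which $\mathfrak L^{\mathrm{int}}$ becomes the real multiplication operator by $-\tfrac{c(s)}{s}\,|\xi|^2$ on $L^2(\R^d\times\Rp,d\xi\,d\lambda)$; equivalently $\mathfrak L^{\mathrm{int}}$ is the decomposable operator $\int^{\oplus}_{\Rp}\tfrac{c(s)}{s}\Delta_x\,d\lambda(s)$ whose fibres are nonnegative multiples of the essentially self-adjoint Laplacian. A real measurable multiplication operator is self-adjoint on its maximal domain, and truncation to compact subsets of $\hat X$ shows that $\mathcal F_x\mathcal D(\hat X)$ is a core. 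Crucially, this argument needs only local boundedness of $c$, since here $c(s)/s$ enters purely as a multiplier and is never differentiated; this is why the intrinsic case requires no extra hypothesis.

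The crux is $\sharp=\M$. Here I again apply $\mathcal F_x$ and use the hypothesis $c(s)=a_1 s+a_2 s^2+a_3 s^3$, so that $c(s)/s=a_1+a_2 s+a_3 s^2$ is a genuine polynomial. The operator becomes the direct integral $\int^{\oplus}_{\R^d}B_\xi\,d\xi$ with fibres
$$B_\xi u=s\,u''-s\,u'-|\xi|^2\bigl(a_1+a_2 s+a_3 s^2\bigr)u$$
on $L^2(\Rp,\lambda)$. For each fixed $\xi$ the added potential $q_\xi(s)=|\xi|^2(a_1+a_2 s+a_3 s^2)$ is smooth, bounded near $s=0$, and bounded below (indeed nonnegative), so the limit-point/limit-circle classification of $B_\xi$ coincides with that of $A$: limit point at $s=0$ (the potential is bounded there, so the endpoint behaviour is unchanged) and limit point at $s=\infty$ (a nonnegative potential perturbation of the limit-point operator $A$ preserves the limit-point property). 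Hence each $B_\xi$ is essentially self-adjoint on $C_0^\infty(\Rp)$. Since $C_0^\infty(\Rp)$ is a common core for all the $\bar B_\xi$ and $\xi\mapsto B_\xi$ is affine in $|\xi|^2$, hence measurable, the direct integral $\int^{\oplus}\bar B_\xi\,d\xi$ is self-adjoint, and the sections built from $C_0^\infty(\Rp)$-valued profiles with compact $s$-support — which lie in $\mathcal F_x\mathcal D(\hat X)$ — form a core. Transporting back by $\mathcal F_x^{-1}$ yields essential self-adjointness of $\mathfrak L^{\M}$ on $\mathcal D(\hat X)$.

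I expect the genuine difficulty to lie entirely in this last case, in the interplay between the singular endpoint $s=0$ of the weight $s^{-1}e^{-s}$ and the $s$-derivatives in the extrinsic part. The polynomial hypothesis on $c$ is used precisely to keep the effective potential $q_\xi$ \emph{bounded at} $s=0$; for $c\equiv1$ one would instead obtain the singular coefficient $|\xi|^2/s$ at the bad endpoint, which changes both the endpoint classification and the core argument, and this is exactly the open case flagged in the introduction. Two points will need the most care: verifying that $\mathcal F_x\mathcal D(\hat X)$ really is a core for the assembled direct integral (uniformly in $\xi$), and the limit-point claim at $s=\infty$, which I would make rigorous either through the Liouville transform $t=2\sqrt s$ to a Schr\"odinger operator $-\partial_t^2+V_0(t)+q_\xi(t)$ on $(0,\infty)$ with a confining potential, or, as a robust alternative to the direct-integral assembly, through Nelson's commutator theorem with a comparison operator built from $-\Delta_x$ and $A$, for which the polynomial growth of $c(s)/s$ is what makes the relevant commutator bounds close.
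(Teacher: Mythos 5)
Your Sturm--Liouville analysis of $A=s(\partial_s^2-\partial_s)$ on $L^2(\Rp,\lambda)$ is correct (limit point at both endpoints, since of the solutions $1,e^s$ of $Au=0$ only $e^s-1\sim s$ is square integrable near $0$ and only $1$ near $\infty$), and the cases $\sharp=\mathrm{ext}$ (tensor with the identity, Reed--Simon VIII.33-type argument) and $\sharp=\mathrm{int}$ (real multiplication operator after $\mathcal F_x$) go through. The genuine gap is exactly the step you flag for $\sharp=\M$ and then do not close: passing from essential self-adjointness of each fibre $B_\xi=A-|\xi|^2 Q$ (with $Q$ multiplication by $a_1+a_2s+a_3s^2$) on $C_0^\infty(\Rp)$ to the assertion that sections with $C_0^\infty(\Rp)$-profiles form a core for $\int^{\oplus}\bar B_\xi\,d\xi$. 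The direct-integral theorem (Reed--Simon XIII.85) gives self-adjointness of the assembled operator on its maximal domain but provides \emph{no} core criterion; fibrewise cores do not in general assemble into cores, and here the obstruction is concrete: the $B_\xi$-graph norm of a fixed profile $u$ grows like $|\xi|^2\|Qu\|$, so an approximation of $\psi(\xi,\cdot)$ that is good for the integrated graph norm must improve as $|\xi|\to\infty$, which a finite sum $\sum f_i(\xi)u_i(s)$ does not automatically deliver. Equivalently, $\mathfrak R^{\M}=\mathbf 1\otimes A-N\otimes Q$ is a sum of two non-commuting unbounded nonnegative operators, and essential self-adjointness of such a sum on a common core is precisely the nontrivial content of the lemma --- your argument assumes it rather than proves it. (Your fallback suggestions, a Liouville transform or Nelson's commutator theorem, are plausible but also not carried out.)

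For comparison, the paper avoids the assembly problem altogether. It first shows (by an explicit cutoff-plus-Banach--Saks approximation) that the functions $f(x)s^k$, $f\in\mathcal S(X)$, $k\in\N$, lie in the domain of the closure and span a dense set $\Upsilon$ invariant under $\mathfrak L^{\M}$; after $\mathcal F_x$ the operator becomes $\mathfrak R^{\M}\varphi=-\tfrac{c(s)}{s}\|x\|_X^2\varphi+s(\partial_s^2-\partial_s)\varphi$, which, because $c$ is the cubic polynomial \eqref{gyurt68otgyft}, maps $f(x)s^k$ into the span of $f$ times $s^{k-1},\dots,s^{k+2}$ (with $\|x\|^2$-factors bounded on $\operatorname{supp}f$). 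The factorial bound $m_l=\int_0^\infty s^{l-1}e^{-s}\,ds\le C^l l!$ on the moments of $\lambda$ then yields $\|(\mathfrak R^{\M})^n\varphi\|\le (4C_7)^n(k+2n)^{k+2n}$, so every $f(x)s^k$ is a quasi-analytic vector and Nussbaum's theorem gives essential self-adjointness on $\Upsilon$, hence on $\mathcal D(\hat X)$. Note that the polynomial hypothesis is thus used for the moment/combinatorial estimates, not only to keep $c(s)/s$ bounded at $s=0$ (though your diagnosis of why $c\equiv1$ fails --- the singular coefficient $1/s$ at the origin --- is consistent with the open problem mentioned in the introduction). If you want to complete your route, you must either prove the direct-integral core statement directly (e.g.\ via a measurable-selection approximation with control uniform in $|\xi|$ on compacta plus a cutoff in $\xi$) or replace it by a quasi-analytic/commutator argument of the paper's type.
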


\begin{proof}
We will only discuss the hardest case $\sharp=\M$. We denote by $(\mathfrak L^{\M},D(\mathfrak L^{\M}))$ the closure of the symmetric operator  $(\mathfrak L^{\M},\mathcal D(\hat X))$ on $L^2(\hat X,\varkappa)$.
We denote by $\mathcal S(X)$ the Schwartz space of real-valued, rapidly decreasing functions on $X$ (see e.g.\ \cite[Sect.~V.3]{ReedSimon1}).

{\it Claim.} For each $f\in \mathcal S(X)$ and $k\in\N$, the function $\varphi(x,s)=f(x)s^k$ belongs to $D(\mathfrak L^{\M})$, and $\mathfrak L^{\M}\varphi$ is given by the right hand side of \eqref{ui8t58rf}.

Indeed, for any functions $f\in\mathcal D(X)$ and $g\in C_0^\infty(\Rp)$, we have $f(x)g(s)\in\mathcal D(\hat X)\subset D(\mathfrak L^\K)$. Hence, by approximation, we can easily conclude that, for any functions $f\in\mathcal S(X)$ and $g\in C_0^\infty(\Rp)$, we have $f(x)g(s)\in D(\mathfrak L^\K)$.

Fix any function $u\in C^\infty(\R)$ such that $\chi_{[1,\,\infty)}\le u\le \chi_{[1/2,\,\infty)}$. Let
$$C_4:=\max_{t\in[1/2,\,1]}\max\{\,|u'(t)|,|u''(t)|\,\}<\infty.$$
For $n\in\N$, let $u_n(t):=u(nt)$, $t\in\R$. Then
\begin{equation}\label{it8}\chi_{[1/n,\,\infty)}\le u_n\le \chi_{[1/(2n),\,\infty)}\end{equation}
and
\begin{equation}\label{tdry7er}
|u'_n(t)|\le C_4\, n\chi_{[1/(2n),\, 1/n]}(t),\quad |u_n''(t)|\le C_4\, n^2\chi_{[1/(2n),\, 1/n]}(t),\quad t\in\R,\ n\in\N.
\end{equation}
We also fix any function $v\in C^\infty(\R)$ such that $\chi_{(-\infty,\,1]}\le v\le \chi_{(-\infty,\, 2]}$. For $n\in\N$, set $v_n(t):=v(t-n)$, $t\in\R$. Hence
\begin{equation}\label{gufr87}\chi_{(-\infty,\,n+1]}\le v_n\le \chi_{(-\infty,\, n+2]},\end{equation}
and for some $C_5>0$
\begin{equation}\label{igtf8t}
\max\{\,|v'_n(t)|,|v_n''(t)|\,\}\le C_5\,\chi_{[n+1,\,n+2]},\quad  t\in\R,\ n\in\N.
\end{equation}
We fix any  $k\in\N$ and set
\begin{equation}\label{ufuf} g_n(s):=s^k u_n(s)v_n(s),\quad s\in\Rp,\ n\in\N.\end{equation}
Clearly, $g_n\in C_0^\infty(\Rp)$. We  fix $f\in\mathcal S(X)$ and set
\begin{equation}\label{gfdyr6ed6}\varphi_n(x,s):=f(x)g_n(s),\quad (x,s)\in\hat X,\ n\in\N.\end{equation}
Thus, $\varphi_n\in D(\mathfrak L^\K)$.
By the dominated convergence theorem,
\begin{equation}\label{ye6jghh}\varphi_n(x,s)\to \varphi(x,s):=f(x)s^k\quad \text{in $L^2(\hat X,\varkappa)$  as $n\to\infty$}.
\end{equation}
We fix any $\psi\in\mathcal D(\hat X)$. Then
\begin{equation}\label{yyyyy}
(-\mathfrak L^\M\varphi_n,\psi)_{L^2(\hat X,\varkappa)}=\mathfrak E^\M(\varphi_n,\psi),\quad n\in\N.\end{equation}
It is easy to see  that
\begin{equation}\label{dr6e}
\lim_{n\to\infty} \mathfrak E^\M(\varphi_n,\psi)=\mathfrak E^\M(\varphi,\psi).\end{equation}
In   \eqref{yyyyy} and \eqref{dr6e} , $\mathfrak E^\M(\cdot,\cdot)$ is given by the formulas in \eqref{yu7e5i}. Hence
\begin{equation}\label{buy7rrf}
\lim_{n\to\infty}(\mathfrak L^\M\varphi_n,\psi)_{L^2(\hat X,\varkappa)}=(\mathfrak L^\M\varphi,\psi)_{L^2(\hat X,\varkappa)}.
\end{equation}
We stress that, in  \eqref{buy7rrf}, the function  $\mathfrak L^\M\varphi\in L^2(\hat X,\varkappa)$ is given by formulas in   \eqref{ui8t58rf}, however we do not yet state that $\varphi\in D(\mathfrak L^\M)$.

By using \eqref{it8}--\eqref{gfdyr6ed6}, it can be easily shown that
$$
\sup_{n\in\N}\|\mathfrak L^\M\varphi_n\|_{L^2(\hat X,\varkappa)}<\infty.
$$
Hence,
by the Banach--Alaoglu and Banach--Saks theorems (see e.g.\ \cite[Appendix, Sect.~2]{MR}), there exists a subsequence
$(\varphi_{n_j})_{j=1}^\infty$ of $(\varphi_n)_{n=1}^\infty$ such that the sequence $(\mathfrak L^\M\xi_i)_{i=1}^\infty$ converges in $L^2(\hat X,\varkappa)$. Here
$$\xi_i:=\frac1i\sum_{j=1}^i\varphi_{n_j},\quad i\in\N.$$
 We note that, for each $i\in\N$, $\xi_i\in\mathcal D(\hat X)$, and by \eqref{ye6jghh}
\begin{equation}\label{ufr7urf} \xi_i\to \varphi\quad \text{in $L^2(\hat X,\varkappa)$  as $i\to\infty$}.\end{equation}
Furthermore, by \eqref{buy7rrf},
$$ \lim_{i\to\infty}(\mathfrak L^\M\xi_i,\psi)_{L^2(\hat X,\varkappa)}=(\mathfrak L^\M\varphi,\psi)_{L^2(\hat X,\varkappa)},\quad \psi\in\mathcal D(\hat X).$$
Hence
\begin{equation}\label{ye6e}
\mathfrak L^\M\xi_i\to \mathfrak L^\M\varphi\quad \text{in $L^2(\hat X,\varkappa)$ as $i\to\infty$}.\end{equation}
By \eqref{ufr7urf} and \eqref{ye6e}, we conclude that $\xi_i\to\varphi$ in the graph norm of the operator $(\mathfrak L^\M,D(\mathfrak L^\M))$. Thus, the claim is proven.

We next note that
\begin{equation}\label{ufru7debjugh}L^2(\hat X,\varkappa)=L^2(X,dx)\otimes L^2(\Rp,\lambda)\end{equation}
 (recall \eqref{5}). Evidently, $\mathcal S(X)$ is a dense subset of $L^2(X,dx)$. Furthermore, the functions $\{s^k\}_{k=1}^\infty$ form a total set in $L^2(\Rp,\lambda)$ (i.e., the linear span of this set is dense in $L^2(\Rp,\lambda)$). Indeed, consider the unitary operator
$$ L^2(\Rp,\lambda)\ni g(s)\mapsto \frac{g(s)}s\,\in L^2(\Rp, se^{-s}\,ds).$$
Under this unitary operator, the set $\{s^k\}_{k=1}^\infty$ goes over into the set $\{s^k\}_{k=0}^\infty$. But the measure $\chi_{\Rp}(s)se^{-s}\,ds$ on $(\R,\mathcal B(\R))$ has Laplace transform which is analytic in a neighborhood of zero, hence the set of polynomials is dense in $ L^2(\Rp, se^{-s}\,ds)$. Therefore, the set
$$
\Upsilon:=
\operatorname{l.s.}\{f(x)s^k\mid f\in\mathcal S(X),\, k\in\N\}$$
is dense in $L^2(\hat X,\varkappa)$. Here $\operatorname{l.s.}$ denotes the linear span.
By the Claim, the set $\Upsilon$ is a subset of $D(\mathfrak L^\M)$. Note also that the operator $\mathfrak L^\M$ maps the set $\Upsilon$ into itself.

Since the symmetric operator $(\mathfrak L^\M,D(\mathfrak L^\M))$ is an extension of the operator $(\mathfrak L^\M,\Upsilon)$, to prove that $(\mathfrak L^\M,D(\mathfrak L^\M))$  is a self-adjoint operator, it suffices to prove that the operator  $(\mathfrak L^\M,\Upsilon)$ is essentially self-adjoint.

We denote by $L_\C^2(\hat X,\varkappa)$ the complex Hilbert space of all complex-valued $\varkappa$-square-integrable functions on $\hat X$.
Let  $\Upsilon_\C$ denote the complexification of $\Upsilon$, i.e., the set of all functions of the form $\varphi_1+i\varphi_2$, where $\varphi_1,\varphi_2\in \Upsilon$. Analogously, we define $L^2_\C(X,dx)$ and $\mathcal S_\C(X)$, the Schwartz space of complex-valued, rapidly decreasing functions on $X$.
 We extend the operator $\mathfrak L^{\M}$ by linearity to  $\Upsilon_\C$.

Recall that the Fourier transform determines a unitary operator $$\mathfrak F:L^2_\C(X,dx)\to L_\C^2(X,dx).$$ This operator leaves the Schwartz space $\mathcal S_\C(X)$ invariant, and furthermore
$$\mathfrak F:\mathcal S_\C(X)\to \mathcal S_\C(X)$$
is a bijective mapping. Under $\mathfrak F$, the Laplace operator $\Delta$ goes over into the operator of multiplication by $-\|x\|_X^2$, see e.g.\ \cite[Sect.~IX.1]{ReedSimon}.
Using \eqref{ufru7debjugh}, we obtain the unitary operator
$$\mathfrak F\otimes\mathbf 1: L_\C^2(\hat X,\varkappa)\to L_\C^2(\hat X,\varkappa).$$
Here $\mathbf 1$ denotes the identity operator.
Clearly
$\mathfrak F\otimes\mathbf 1:\Upsilon_\C\to\Upsilon_\C$
is a bijective mapping. We define an operator
$\mathfrak R^\M:\Upsilon_\C\to\Upsilon_\C$ by
$$\mathfrak R^\M:=(\mathfrak F\otimes\mathbf 1)\mathfrak L^\M(\mathfrak F\otimes\mathbf 1)^{-1}. $$
Explicitly, for each $\varphi\in\Upsilon_\C$,
\begin{equation}\label{utdei67ebhgl}
(\mathfrak R^\M\varphi)(x,s)=-\frac{c(s)}{s}\,\|x\|_{X}^2\, \varphi(x,s)+s\left(\frac{\partial^2}{\partial s^2}-\frac{\partial}{\partial s}\right)\varphi(x,s).\end{equation}
It suffices to prove that the operator $(\mathfrak R^\M,\Upsilon_\C)$ is essentially self-adjoint on $L_\C^2(\hat X,\varkappa)$.

Since the operator $(\mathfrak R^\M,\Upsilon_\C)$ is non-positive, by the Nussbaum theorem \cite{Nussbaum}, it suffices to prove that, for each function
\begin{equation}\label{tyde6i7e5}\varphi(x,s)=f(x)s^k\end{equation} with $f\in\mathcal D(X)$ and $k\in\N$,
\begin{equation}\label{gigtfi}
\sum_{n=1}^\infty \|(\mathfrak R^\M)^n \varphi\|_{L_\C^2(\hat X,\varkappa)}^{-1/2n}=\infty.\end{equation}
For a function $\varphi(x,s)$ of the form \eqref{tyde6i7e5}, by virtue of
\eqref{gyurt68otgyft} and \eqref{utdei67ebhgl}, we get
\begin{align}
&(\mathfrak R^\M\varphi)(x,s)\notag\\
&\quad=-(a_1 s^k+a_2s^{k+1}+a_3s^{k+2})\|x\|_X^2\,f(x)+(k(k-1)s^{k-1}-ks^k)f(x)\notag\\
&\quad=(\mathfrak R^\M_{-1}\varphi)(x,s)+(\mathfrak R^\M_{0}\varphi)(x,s)+(\mathfrak R^\M_{1}\varphi)(x,s)+(\mathfrak R^\M_{2}\varphi)(x,s).\label{jiyr}
\end{align}
Here
\begin{align}
(\mathfrak R^\M_{-1}\varphi)(x,s)&=k(k-1)s^{k-1}f(x),\notag\\
(\mathfrak R^\M_{0}\varphi)(x,s)&=(-a_1\|x\|_X^2-k)s^k f(x),\notag\\
(\mathfrak R^\M_{1}\varphi)(x,s)&=-a_2\|x\|_X^2s^{k+1} f(x),\notag\\
(\mathfrak R^\M_{2}\varphi)(x,s)&=-a_3\|x\|_X^2s^{k+2} f(x). \label{byur78f}
\end{align}
For $l\in\N$, denote
\begin{equation}\label{uytfr8u}
m_l:=\int_{\Rp}s^l\,d\lambda(s)=\int_0^\infty s^{l-1} e^{-s}\,ds.
\end{equation}
Since the Laplace transform of the measure $\chi_{\Rp}(s)e^{-s}\,ds$ on $\R$ is analytic in a neighborhood of zero, there exists a constant $C_6\ge1$ such that
\begin{equation}\label{tyei7} m_l\le C_6^l\, l!\,,\quad l\in\N.\end{equation}

Consider a product $\mathfrak R^\M_{i_1}\dotsm \mathfrak R^\M_{i_n}\varphi$, where $i_1,\dots,i_n\in\{-1,0,1,2\}$. Denote by
$l_j$ the number of the $\mathfrak R^\M_j$ operators among the operators $\mathfrak R^\M_{i_1},\dots, \mathfrak R^\M_{i_n}$.
Thus, $l_{-1}+l_0+l_1+l_2=n$.
Note that the function $f(x)$ has a compact support in $X$, hence the function $\|x\|_X^2$ is bounded on $\operatorname{supp}(f)$.
Recall also the estimate
\begin{equation}\label{t7re56e64}(2j)!\le 4^j\,(j!)^2,\quad j\in\N.\end{equation}
Hence, by \eqref{jiyr}--\eqref{t7re56e64}, we get:
\begin{equation}\label{igyi9gtf}
\|\mathfrak R^\M_{i_1}\dotsm \mathfrak R^\M_{i_n}\varphi\|_{L^2(\hat X,\varkappa)}\le C_{7}^n
(k-l_{-1}+l_1+2l_2)!\, (k-l_{-1}+l_1+2l_2)^{2l_{-1}+l_0}
\end{equation}
for some constant $C_{7}>0$ which is independent of $l_{-1},l_0,l_1,l_2,n$. Since $j!\le j^j$, we get from \eqref{igyi9gtf}
\begin{align}
\|\mathfrak R^\M_{i_1}\dotsm \mathfrak R^\M_{i_n}\varphi\|_{L^2(\hat X,\varkappa)}&\le C_{7}^n
(k-l_{-1}+l_1+2l_2)^{k-l_{-1}+l_1+2l_2+2l_{-1}+l_0}\notag\\
&=C_{7}^n(k-l_{-1}+l_1+2l_2)^{k+l_{-1}+l_0+l_1+2l_{2}}\notag\\
&\le C_{7}^n(k+2n)^{k+2n}.\notag
\end{align}
Therefore,
\begin{equation*}
\|(\mathfrak R^\M)^n \varphi\|_{L_\C^2(\hat X,\varkappa)}\le (4C_{7})^n(k+2n)^{k+2n}.
\end{equation*}
From here \eqref{gigtfi} follows.
\end{proof}

Let us recall the notion of a second quantization in a symmetric Fock space. Let $\mathcal H$ be a real separable  Hilbert space, and let  $\mathcal F(\mathcal H)$ be the symmetric Fock space over $\mathcal H$. Let $B$ be a bounded linear operator in $\mathcal H$, and assume that the operator norm of $B$ is $\le1$. We define the second quantization of $B$ as a bounded linear operator $\operatorname{Exp}(B)$ in $\mathcal H$ which satisfies $\operatorname{Exp}(B)\Psi:=\Psi$ ($\Psi$ being the vacuum vector in $\mathcal F(\mathcal H)$) and for each $n\in\N$, the restriction of $\operatorname{Exp}(B)$ to $\mathcal F^{(n)}(\mathcal H)$ coincides with $B^{\otimes n}$.

Let the conditions of Theorem~\ref{ydred6e6r} be satisfied.
For  $\sharp=\mathrm{int},\mathrm{ext},\M$, recall the non-positive self-adjoint operator $(\mathfrak L^\sharp,D(\mathfrak L^\sharp))$ in  $L^2(\hat X,\varkappa)$. By Lemma~\ref{gftyrebuhj}, this operator is essentially self-adjoint on  $\mathcal D(\hat X)$ and, for each $\varphi\in \mathcal D(\hat X)$, $\mathfrak L^\sharp\varphi$ is given by  \eqref{ui8t58rf}.
Recall the unitary operator $I$ in formula \eqref{tfdty}. In view of the bijective mapping $\mathcal R  :\Gamma_{pf}(\hat X)\to\mathbb K(X) $, we can equivalently treat the operator $I$ as a unitary operator
\begin{equation}\label{yitu} I: L^2(\K( X),\mathcal G)\to\mathcal F(L^2(\hat X,\varkappa))\end{equation}
(recall that the Poisson measure $\pi$ is concentrated on $\Gamma_{pf}(\hat X)$).

 \begin{corollary}\label{fufr}
Let the conditions of Theorem~\ref{ydred6e6r} be satisfied. Then, for $\sharp=\mathrm{int},\mathrm{ext},\M$,
we have
$$ Ie^{tL^\sharp}I^{-1}=\operatorname{Exp}(e^{t\mathfrak L^\sharp}),\quad t\ge0,$$
i.e., under the unitary isomorphism \eqref{yitu}, the semigroup $(e^{tL^\sharp})_{t\ge0}$
with generator $(L^\sharp,\linebreak[1] D(L^\sharp))$
goes over into the semigroup $(\operatorname{Exp}(e^{t\mathfrak L^\sharp}))_{t\ge0}$ --- the second
quantization of the semigroup $(e^{t\mathfrak L^\sharp})_{t\ge0}$ with generator $(\mathfrak L^\sharp,D(\mathfrak L^\sharp))$.
\end{corollary}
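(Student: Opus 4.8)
The plan is to combine the essential self-adjointness already established with the functorial behaviour of differential second quantization under the exponential map.

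First I would record what the theorem gives us on the Fock-space side. By Theorem~\ref{ydred6e6r} the operator $(L^\sharp,\FCK)$ is essentially self-adjoint, so its closure $(L^\sharp,D(L^\sharp))$ is self-adjoint and generates the semigroup $(e^{tL^\sharp})_{t\ge0}$. Conjugating by the unitary $I$ from \eqref{yitu}, the operator $\tilde H^\sharp:=IL^\sharp I^{-1}$ is self-adjoint on $\mathcal F(L^2(\hat X,\varkappa))$, and by \eqref{yr6e6i} it is the closure of the differential second quantization $(d\operatorname{Exp}(\mathfrak L^\sharp),\tilde{\mathcal P})$, where $\tilde{\mathcal P}=\mathcal F_{\mathrm{alg}}(\mathcal D(\hat X))$. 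Since $Ie^{tL^\sharp}I^{-1}=e^{t\tilde H^\sharp}$, it suffices to identify the self-adjoint operator $\tilde H^\sharp$ and then compute its semigroup.

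The next step is to identify $\tilde H^\sharp$ with the self-adjoint differential second quantization of the \emph{self-adjoint} operator $(\mathfrak L^\sharp,D(\mathfrak L^\sharp))$. By Lemma~\ref{gftyrebuhj} the operator $(\mathfrak L^\sharp,\mathcal D(\hat X))$ is essentially self-adjoint on $L^2(\hat X,\varkappa)$; hence, by \cite[Chap.~6, subsec.~1.1]{BK}, $(d\operatorname{Exp}(\mathfrak L^\sharp),\mathcal F_{\mathrm{alg}}(\mathcal D(\hat X)))$ is essentially self-adjoint and its closure is precisely the differential second quantization built from the self-adjoint closure of $\mathfrak L^\sharp$. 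Comparing closures, $\tilde H^\sharp=d\operatorname{Exp}(\mathfrak L^\sharp)$ as self-adjoint operators. It then remains to establish the functional-calculus identity $e^{t\,d\operatorname{Exp}(A)}=\operatorname{Exp}(e^{tA})$ for the self-adjoint, non-positive operator $A=\mathfrak L^\sharp$. I would prove this by reducing to the $n$-particle subspaces: on $\mathcal F^{(n)}(\mathcal H)$ the self-adjoint operator $d\operatorname{Exp}(A)$ is the closure of $\sum_{i=1}^n \mathbf 1\otimes\dots\otimes A\otimes\dots\otimes\mathbf 1$, the sum of the commuting self-adjoint generators acting in the individual tensor slots, so by the spectral theorem its exponential is $(e^{tA})^{\otimes n}$, which by definition coincides with the restriction of $\operatorname{Exp}(e^{tA})$ to $\mathcal F^{(n)}(\mathcal H)$. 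Because $A\le0$ we have $\|e^{tA}\|\le1$, so $\operatorname{Exp}(e^{tA})$ is a well-defined bounded operator of norm $\le1$ and the two agree on the dense algebraic sum of the $\mathcal F^{(n)}(\mathcal H)$; this is the content of \cite[Chap.~6, Sect.~1]{BK}. Combining the three steps yields $Ie^{tL^\sharp}I^{-1}=e^{t\tilde H^\sharp}=\operatorname{Exp}(e^{t\mathfrak L^\sharp})$.

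The main obstacle is the middle step: one must be careful that passing to closures commutes with the second-quantization functor, i.e.\ that the closure of $d\operatorname{Exp}$ of the symmetric operator $(\mathfrak L^\sharp,\mathcal D(\hat X))$ equals $d\operatorname{Exp}$ of the self-adjoint closure $(\mathfrak L^\sharp,D(\mathfrak L^\sharp))$. This is exactly where the essential self-adjointness supplied by Lemma~\ref{gftyrebuhj} is indispensable, and it is the only place where the structural assumption \eqref{gyurt68otgyft} on $c$ (for $\sharp=\M$) enters, through that lemma. Once this identification is secured, the remaining spectral computation of the semigroup is routine.
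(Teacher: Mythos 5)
Your argument is correct and follows essentially the same route as the paper: identify $IL^\sharp I^{-1}$ with $d\operatorname{Exp}(\mathfrak L^\sharp)$ on $\mathcal F_{\mathrm{alg}}(\mathcal D(\hat X))$, use the essential self-adjointness of both $(L^\sharp,\FCK)$ and $(\mathfrak L^\sharp,\mathcal D(\hat X))$ to match the closures, and conclude via the standard identity $e^{t\,d\operatorname{Exp}(A)}=\operatorname{Exp}(e^{tA})$. The paper simply cites \cite[Chap.~6, subsec.~1.1]{BK} for the last two steps, whereas you spell out the $n$-particle computation; the content is the same.
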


\begin{proof} It follows from the proof of Theorem~\ref{ydred6e6r} that
$$ I L^\sharp I^{-1}f=d\operatorname{Exp}(\mathfrak L^\sharp)f,\quad f\in\mathcal F_{\mathrm{alg}}(\mathcal D(\hat X)),$$
and the operator $d\operatorname{Exp}(\mathfrak L^\sharp)$ is essentially self-adjoint on $\mathcal F_{\mathrm{alg}}(\mathcal D(\hat X))$. From here the result immediately follows (cf.\ e.g.\ \cite[Chap.~6, subsec.~1.1]{BK}).
\end{proof}

\begin{remark}\label{opiugfyugf}
Consider the operator $(\mathfrak L^{\mathrm{ext}},D(\mathfrak L^{\mathrm{ext}}))$. We define  the linear operator
$$\mathfrak L^\mathrm{ext}_\Rp u(s):=s\left(
\frac{\partial^2}{\partial s^2}-\frac{\partial}{\partial s}
\right) u(s),\quad u\in C_0^\infty(\Rp).$$
It follows from the proof of Lemma~\ref{gftyrebuhj} that this operator is essentially self-adjoint on $L^2(\Rp,\lambda)$, and we denote by $(\mathfrak L^\mathrm{ext}_\Rp,D(\mathfrak L^\mathrm{ext}_\Rp))$ the closure of this operator.
 Recall that $L^2(\hat X,\varkappa)=L^2(X,dx)\otimes L^2(\Rp,\lambda)$. Using \eqref{ui8t58rf}, it is easy to show that
$$ \mathfrak L^\mathrm{ext}=\mathbf1\otimes \mathfrak L^\mathrm{ext}_\Rp .$$
Using e.g.\ \cite[Chap.~XI]{RY},
we easily conclude that $(\mathfrak L^\mathrm{ext}_\Rp,D(\mathfrak L^\mathrm{ext}_\Rp))$ is the generator of the Markov process $Y(t)$ on $\R_+=[0,\infty)$ given by the following space-time transformation of the square of the $0$-dimensional Bessel process $Q(t)$:
$$Y(t)=e^{-2t}Q((e^{2t}-1)/2).$$
Note that, for each  starting point $s>0$, the process $Y(t)$ is at 0 (so that it has exited  $\Rp$) with probability $\exp(-s/(1-e^{-t}))$, and once $Y(t)$ reaches zero it stays there forever (i.e., does not return to $\Rp$).
\end{remark}

\subsection*{Acknowledgment}
The authors   acknowledge the financial support of the SFB~701 ``Spectral structures and topological methods in mathematics'' (Bielefeld University) and the Research Group
``Stochastic Dynamics: Mathematical Theory and Applications'' (Center for Interdisciplinary Research, Bielefeld University).

\end{document}